\numberwithin{equation}{section}
\newtheorem{theorem}{Theorem}[section]
\newtheorem{lemma}[theorem]{Lemma}
\newtheorem{proposition}[theorem]{Proposition}
\newtheorem{corollary}[theorem]{Corollary}
\theoremstyle{definition}
\newtheorem{definition}[theorem]{Definition}
\theoremstyle{remark}
\newcommand\Supp{\operatorname{Supp}}
\newcommand\coker{\operatorname{coker}}
\newcommand\Ann{\operatorname{Ann}}
\newcommand\Tor{\operatorname{Tor}}
\newcommand\Hom{\operatorname{Hom}}
\newcommand\Ext{\operatorname{Ext}}
\newcommand\Rad{\operatorname{Rad}}
\newcommand\grade{\operatorname{grade}}
\newcommand\Cograde{\operatorname{Cograde}}
\newcommand{\qism}{\stackrel{\sim}{\longrightarrow}}
\begin{document}
\title[Generalized completion homology modules]{On generalized completion homology modules}%
\author[W. Mahmood]{Waqas Mahmood }
\address{Quaid-I-Azam University Islamabad, Pakistan}%
\email{ waqassms$@$gmail.com}

\thanks{This research was partially supported by Higher Education Commission, Pakistan}
\subjclass[2000]{13D45.}
\keywords{Generalized completion homology modules, Generalized local cohomology and homology modules, Ext and Tor modules}%

\maketitle
\begin{abstract} Let $I$ be an ideal of a commutative Noetherian ring $R$. Let $M$ and $N$ be any $R$-modules. The generalized completion homology modules $L_i\Lambda^I (N,M)$ are defined, for $i\in \mathbb{Z}$, as the homologies of the complex $\lim\limits_{\longleftarrow}(N/I^sN\otimes_R F_{\cdot}^R)$. Here $F_{\cdot}^R$ denotes a flat resolution of $M$. In this article we will prove the vanishing and non-vanishing properties of $L_i\Lambda^I (N,M)$. We denote  $H^{i}_{I}(N,M)$ (resp. $U^I_i(N,M)$) by the generalized local cohomology modules (resp. the generalized local homology modules). As a technical tool we will construct several natural homomorphisms of $L_i\Lambda^I (N,M)$, $H^{i}_{I}(N,M)$ and $U^I_i(N,M)$. We will investigate when these natural homomorphisms are isomorphisms. Moreover if $M$ is Artinian and $N$ is finitely generated then it is proven that $L_i\Lambda^I (N,M)$ is isomorphic to $U^I_i(N,M)$ for each $i\in \mathbb{Z}$. The similar result is obtained for $H^i_{I}(N,M)$. Furthermore if both $M$ and $N$ are finitely generated with $c=\grade(I,M)$. Then we are able to prove several necessary and sufficient conditions such that $H^i_{I}(M)=0$ for all $i\neq c.$ Here $H^i_{I}(M)$ denotes the ordinary local cohomology module.
\end{abstract}

\section{Introduction}
Let $R$ be a commutative Noetherian ring, $I$ an ideal of $R$ and $M$ an $R$-module. The left derived functors, $L_i\Lambda^I (M)$, of the $I$-adic completion of $M$ was firstly studied by Matlis (see \cite{mat} and \cite{mat1} for details). Suppose that $F_{\cdot}^R$ is a flat resolution of $M$ and $N$ an arbitrary $R$-module. In \cite{n} T. Nam introduced the notion of the generalized completion homology modules $L_i\Lambda^I (N,M)$, for $i\in \mathbb{Z}$, as the homologies of the complex $\lim\limits_{\longleftarrow}(N/I^sN\otimes_R F_{\cdot}^R)$. It is clear that if $N=R$ then $L_i\Lambda^I (R,M)=L_i\Lambda^I (M)$ are the well-known left derived functors of completion.

In this article we will study various properties of $L_i\Lambda^I (N,M)$. In general these functors are difficult to compute. Firstly, in \cite[Theorem 20]{mat1}, Matlis has investigated some results on $L_i\Lambda^I (M)$. In particular if $I$ is generated by a regular sequence then he proved the following isomorphism
\[
L_i\Lambda^I (M)\cong \Ext_R^i(\lim_{\longrightarrow}R/I^s,M)
\]
for all $i\in \mathbb{Z}$. After that Greenlees and May provided a criterion to compute $L_i\Lambda^I (M)$ in terms of certain local homology groups of $M$ (see \cite[Thoerem 2.2]{g}). Most recently P. Schenzel has proved (see \cite[Theorem 1.1]{pet2}) that we can construct the completion homology modules as a dual to the local cohomology modules $H^i_{I}(M)$. We refer to \cite{goth} for the definition of local cohomology modules.

Moreover Cuong and Nam introduced the local homology modules $U^I_i(M)$ which is actually a dual definition of the local cohomology modules. They proved, in \cite[Propsition 4.1]{c1}, that if $M$ is an Artinian $R$-module then $U^I_i(M)$ is isomorphic to $L_i\Lambda^I (M)$ for all $i\in \mathbb{Z}$. Moreover a duality between local homology and local cohomology modules was proved in their paper.

Few years ago Herzog defined, in \cite{her}, the generalization of local cohomology modules as follows:
\[
H^i_{I}(N,M)\cong \lim\limits_{\longrightarrow} \Ext_R^i(N/I^sN,M) \text{ for all $i\in \mathbb{Z}$. }
\]
Many people have studied about the modules $H^i_{I}(N,M)$. For instance the vanishing, non-vanishing and Artinianess properties of $H^i_{I}(N,M)$. For details we refer the reader to \cite{l}, \cite{c2}, \cite{her1}, \cite{k1} and \cite{ma}. Moreover Yassemi defined the functor $\Gamma_I(M,N)$ and proved that its cohomologies are isomorphic to Herzog's generalized local cohomology functor (see \cite[Theorem 3.4]{y}). As a dualization of this T. Nam introduced the generalized local homology modules $U^I_i(N,M)$ which are defined as $\lim\limits_{\longleftarrow}\Tor^R_{i}(N/I^sN, M)$ (see \cite{n1}).

Bijan-Zadeh and Moslehi have proved finiteness and vanishing properties of $U^I_i(N,M)$ (see \cite[Theorems 3.1 and 4.4]{b2}). Moreover the non-vanishing results are obtained in \cite[Theorem 2.4]{k}.

Our main purpose is to investigate the vanishing and non-vanishing results of $L_i\Lambda^I (N,M)$. It seems to be not so much known about the highest natural number $s\in \mathbb{N}$ such that $H^{s}_{I}(N,M)\neq 0$. In \cite[Theorem 3.5]{d1} Divaani-Aazar and Hajikarimi proved that we can compute such $s$ for a Cohen-Macaulay local ring $R$ with either projective dimension of $N$ or injective dimension of $M$ is finite.

If $H^{i}_{I}(N,M)=0$ for all $i\neq c=\grade(I,M)$. Then we call $M$ a cohomologically complete intersection with respect to the pair $(N,I)$. In case of $N=R$ we will say that $M$ is cohomologically complete intersection with respect to $I$.

Further assume that $R$ is local and $D(\cdot)=\Hom_R(\cdot,E_R(k))$ denotes the ordinary Matlis dual functor. It is shown, in Corollary \ref{0.01}, that $M$ is a cohomologically complete intersection with respect to $(N,I)$ if and only if $L_i\Lambda^I (N,D(M))=0$ for all $i\neq c$ if and only if $U^I_i(N,D(M))=0$ for all $i\neq c$.

As a technical tool we will construct several natural homomorphisms of $L_i\Lambda^I (N,M)$, $H^{i}_{I}(N,M)$ and $U^I_i(N,M)$. With the assumption of $M$ is cohomologically complete intersection with respect to $I$ we will show that these homomorphisms are isomorphisms. In this regard we prove the following Theorems:

\begin{theorem}\label{22a}
Let $0\neq M$ be a module over an arbitrary commutative Noetherian ring $R$. Let $I$ be an ideal and $c=\grade(I,M)$. Then the following conditions are equivalent:
\begin{itemize}
\item[(i)] $M$ is cohomologically complete intersection with respect to $I$.

\item[(ii)] For any $R$-module $N$ the natural homomorphism
\[
H^{i}_{I}(N,H^c_{I}(M))\to H^{i+c}_{I}(N,M)
\]
is an isomorphism for all $i\in \mathbb{Z}$.
\end{itemize}
Assume in addition that $R$ is local. Then the above conditions are equivalent to the following:
\begin{itemize}
\item[(iii)] For any finitely generated $R$-module $N$ the natural homomorphism
\[
U_{i+c}^I(N, D(M))\to U_{i}^I(N, D(H^c_I(M)))
\]
is an isomorphism for all $i\in \mathbb{Z}$.

\item[(iv)] For any finitely generated $R$-module $N$ the natural homomorphism
\[
L_{i+c}\Lambda^I(N, D(M))\to L_{i}\Lambda^I(N, D(H^c_I(M)))
\]
is an isomorphism for all $i\in \mathbb{Z}$.
\end{itemize}
\end{theorem}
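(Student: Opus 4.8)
The plan is to pass to the derived category and reduce everything to Schenzel's structural description of a cohomologically complete intersection, namely that condition (i) is equivalent to $\mathrm{R}\Gamma_I(M)\simeq H^c_I(M)[-c]$, i.e. to the complex $\mathrm{R}\Gamma_I(M)$ being concentrated in the single degree $c$. I read the natural homomorphism of (ii) as the one induced by the canonical morphism $H^c_I(M)[-c]=\tau_{\le c}\,\mathrm{R}\Gamma_I(M)\to \mathrm{R}\Gamma_I(M)\to M$ after applying $H^{i+c}$ of $\mathrm{R}\Gamma_I\,\RHom_R(N,-)$. I would first establish (i)$\Leftrightarrow$(ii) over an arbitrary Noetherian $R$, and then derive (iii) and (iv) in the local case from (ii) by Matlis duality.

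For (i)$\Rightarrow$(ii) I would argue with an injective resolution $M\to E^{\bullet}$, which avoids any finiteness hypothesis on $N$. Since $N/I^sN$ is $I$-torsion, every homomorphism $N/I^sN\to E^{j}$ lands in the torsion submodule, so $\Hom_R(N/I^sN,E^{\bullet})=\Hom_R(N/I^sN,\Gamma_I E^{\bullet})$. Under (i) the complex $\Gamma_I E^{\bullet}$ is a bounded-below complex of injective $I$-torsion modules with cohomology only in degree $c$, hence homotopy equivalent to an injective resolution of $H^c_I(M)$ placed in degrees $\ge c$. Computing cohomology in degree $i+c$ and passing to the limit over $s$ then identifies $H^{i+c}_I(N,M)=\varinjlim_s\Ext^{i+c}_R(N/I^sN,M)$ with $\varinjlim_s\Ext^{i}_R(N/I^sN,H^c_I(M))=H^{i}_I(N,H^c_I(M))$, compatibly with the natural map and for every $R$-module $N$.

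For (ii)$\Rightarrow$(i) it is enough to test with $N=R$, where the natural map becomes $\mathrm{R}\Gamma_I(H^c_I(M))\to \mathrm{R}\Gamma_I(M)[c]$, obtained by applying $\mathrm{R}\Gamma_I$ to $H^c_I(M)[-c]\to M$ and using the idempotency $\mathrm{R}\Gamma_I\mathrm{R}\Gamma_I\simeq\mathrm{R}\Gamma_I$. Its cone is $\mathrm{R}\Gamma_I(C)$, where $C=\tau_{\ge 1}(\mathrm{R}\Gamma_I(M)[c])$ has for cohomology exactly the $I$-torsion modules $H^{j}_I(M)$ with $j>c$. The hypothesis makes the map an isomorphism in every degree, forcing $\mathrm{R}\Gamma_I(C)\simeq 0$; the crux is to upgrade this to $C\simeq 0$. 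For that I would feed $C$ into the spectral sequence $E_2^{p,q}=H^p_I(H^q(C))\Rightarrow H^{p+q}(\mathrm{R}\Gamma_I C)=0$: if $q_0$ is the least degree with $H^{q_0}(C)\neq 0$, then $E_2^{0,q_0}=\Gamma_I(H^{q_0}(C))=H^{q_0}(C)$ sits in a corner that neither receives nor emits a differential, so it survives to $E_\infty$ and must vanish, a contradiction. Hence $H^{j}_I(M)=0$ for $j>c$, and with $H^{j}_I(M)=0$ for $j<c$ (automatic since $c=\grade(I,M)$) this gives (i). I expect this last vanishing step to be the principal obstacle, as it is where a mere natural isomorphism has to be converted into an acyclicity statement about $\mathrm{R}\Gamma_I(M)$.

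Finally, in the local case I would transport (ii) to homology by Matlis duality, using that $D$ is exact and, since $E_R(k)$ is a cogenerator over the local ring, faithful on all modules. The Ext--Tor duality $\Tor^R_i(L,D(M))\cong D(\Ext^i_R(L,M))$ for finitely presented $L$, combined with the fact that $D$ turns $\varinjlim$ into $\varprojlim$, yields $U^I_i(N,D(M))\cong D(H^i_I(N,M))$ for finitely generated $N$, and similarly the completion-homology duality $L_i\Lambda^I(N,D(M))\cong D(H^i_I(N,M))$ proved earlier. Applying $D$ to the natural map of (ii) for finitely generated $N$ produces precisely the maps of (iii) and (iv), and by faithful exactness of $D$ each of these is an isomorphism if and only if the map in (ii) is. Since (ii)$\Rightarrow$(i) used only $N=R$, condition (ii) for arbitrary $N$ is equivalent to (ii) for finitely generated $N$, so (i)--(iv) are all equivalent and the cycle closes.
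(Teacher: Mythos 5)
Your proposal is correct and follows essentially the same route as the paper: the truncation triangle $H^c_I(M)[-c]\to \mathrm{R}\Gamma_I(M)\to C$ is exactly the paper's truncation complex $C^{\cdot}_M(I)$, the converse direction is obtained by specializing to $N=R$, and (iii) and (iv) are deduced from (ii) via the dualities $U^I_i(N,D(-))\cong L_i\Lambda^I(N,D(-))\cong D(H^i_I(N,-))$ together with faithfulness of $D$, just as in the paper's Proposition \ref{2.3} and Corollary \ref{02}. The only packaging difference is that where the paper cites an external lemma for $H^i_I(C^{\cdot}_M(I))\cong H^i(C^{\cdot}_M(I))$ (the cohomologies being $I$-torsion), you prove the needed acyclicity directly with the hypercohomology spectral sequence $E_2^{p,q}=H^p_I(H^q(C))$, which is a sound, self-contained substitute.
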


\begin{theorem}\label{221a}
Let $0\neq M$ be a module over a local ring $R$. Let $I$ be an ideal and $c=\grade(I,M)$. Then the following conditions are equivalent:
\begin{itemize}
\item[(i)] $M$ is cohomologically complete intersection with respect to $I$.

\item[(ii)] For any $R$-module $N$ the natural homomorphism
\[
H^i_{I}(N,D(M))\to H^{i+c}_{I}(N,D(H^c_I(M)))
\]
is an isomorphism for all $i\in \mathbb{Z}$.
\end{itemize}
Assume in addition that $M$ is Artinian. Then the above conditions are equivalent to the following:
\begin{itemize}
\item[(iii)] For any finitely generated $R$-module $N$ the natural homomorphism
\[
U_{i+c}^I(N, D(D(H^c_I(M))))\to U_{i}^I(N, M)
\]
is an isomorphism for all $i\in \mathbb{Z}$.

\item[(vi)] For any finitely generated $R$-module $N$ the natural homomorphism
\[
L_{i+c}\Lambda^I(N, D(D(H^c_I(M))))\to L_{i}\Lambda^I(N, M)
\]
is an isomorphism for all $i\in \mathbb{Z}$.
\end{itemize}
\end{theorem}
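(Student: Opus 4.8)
The plan is to run the whole argument in the derived category of $R$, reusing the machinery behind Theorem \ref{22a}, and then to transport the resulting cohomological equivalence to the homology and completion-homology statements by Matlis duality. The engine is the reformulation of condition (i) already used for Theorem \ref{22a}: since $c=\grade(I,M)$ is the least integer with $H^i_I(M)\neq 0$, the canonical truncation map
\[
\iota\colon H^c_I(M)[-c]\longrightarrow \mathrm{R}\Gamma_I(M)
\]
is a quasi-isomorphism if and only if $H^i_I(M)=0$ for all $i\neq c$, i.e. if and only if $M$ is a cohomologically complete intersection with respect to $I$. Throughout I would use the description $H^j_I(N,Y)\cong H^j\,\mathrm{R}\Hom_R(N,\mathrm{R}\Gamma_I(Y))$ of generalized local cohomology, together with the Matlis/Greenlees--May dualities $D\circ\mathrm{R}\Gamma_I\simeq \mathrm{L}\Lambda^I\circ D$ and $D\circ\mathrm{L}\Lambda^I\simeq \mathrm{R}\Gamma_I\circ D$, and the equivalences $\mathrm{R}\Gamma_I\,\mathrm{L}\Lambda^I\simeq \mathrm{R}\Gamma_I$, $\mathrm{L}\Lambda^I\,\mathrm{R}\Gamma_I\simeq \mathrm{L}\Lambda^I$, all valid for the local ring $R$.

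First I would identify the natural homomorphism of (ii) and prove (i) $\Rightarrow$ (ii). Applying $D$ to $\iota$ and using $D\,\mathrm{R}\Gamma_I(M)\simeq \mathrm{L}\Lambda^I(D(M))$ gives a morphism $\mathrm{L}\Lambda^I(D(M))\to D(H^c_I(M))[c]$; applying $\mathrm{R}\Gamma_I$ and invoking $\mathrm{R}\Gamma_I\,\mathrm{L}\Lambda^I\simeq \mathrm{R}\Gamma_I$ produces a morphism $\beta\colon \mathrm{R}\Gamma_I(D(M))\to \mathrm{R}\Gamma_I(D(H^c_I(M)))[c]$. Feeding $\beta$ into $\mathrm{R}\Hom_R(N,-)$ and passing to cohomology yields exactly the arrow $H^i_I(N,D(M))\to H^{i+c}_I(N,D(H^c_I(M)))$ of (ii). If (i) holds then $\iota$, hence its Matlis dual, hence $\beta$, hence $\mathrm{R}\Hom_R(N,\beta)$ are quasi-isomorphisms, so the map is an isomorphism for every $N$ and every $i$; this is immediate and needs no finiteness on $N$.

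The delicate direction is (ii) $\Rightarrow$ (i). Specializing (ii) to $N=R$ shows that $\beta$ itself is a quasi-isomorphism. I would then apply $\mathrm{L}\Lambda^I$ and use $\mathrm{L}\Lambda^I\,\mathrm{R}\Gamma_I\simeq \mathrm{L}\Lambda^I$ to deduce that $\mathrm{L}\Lambda^I$ applied to the dual of $\iota$ is a quasi-isomorphism; since both $\mathrm{L}\Lambda^I(D(M))$ and $D(H^c_I(M))$ are derived $I$-complete (the latter because $H^c_I(M)$ is $I$-torsion), $\mathrm{L}\Lambda^I$ is the identity on them and the dual of $\iota$ is already a quasi-isomorphism. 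Dualizing once more and using $D\,\mathrm{L}\Lambda^I\simeq \mathrm{R}\Gamma_I\,D$ gives $\mathrm{R}\Gamma_I(D(D(M)))\simeq H^c_I(M)[-c]$, whence $H^i_I(M)=0$ for $i\neq c$. I expect the main obstacle to lie exactly here: controlling the biduality morphism $M\to D(D(M))$ without assuming $M$ Artinian. I plan to handle this by working on the $I$-torsion cohomology, on which biduality is invisible, and by exploiting that (ii) is assumed for \emph{all} $R$-modules $N$, which supplies enough test objects to read off the required vanishing directly; the Greenlees--May completeness argument above is what makes the descent from ``$\beta$ is an isomorphism'' to ``$\iota$ is an isomorphism'' legitimate.

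Finally, under the extra hypothesis that $M$ is Artinian, I would obtain the equivalence of (iii) and (iv) with the first two conditions by a termwise duality. For finitely generated $N$ the module $N/I^sN$ is finitely generated, so the Matlis duality between generalized local cohomology and generalized local homology gives $D\bigl(H^i_I(N,Y)\bigr)\cong U^I_i(N,D(Y))$; dualizing the isomorphism of (ii) therefore produces the isomorphism $U^I_{i+c}(N,D(D(H^c_I(M))))\xrightarrow{\sim}U^I_i(N,D(D(M)))$, and since $M$ is Matlis reflexive one has $D(D(M))\cong M$, which is precisely (iii), with the arrow of (iii) the Matlis dual of the arrow of (ii). The passage between (iii) and (iv) is then the already-established identification $L_i\Lambda^I(N,-)\cong U^I_i(N,-)$ for finitely generated $N$ and Artinian second argument: here I only need that $M$ and $D(D(H^c_I(M)))$ are Artinian, the latter because $M$ Artinian forces $H^c_I(M)$ to be Artinian and hence $D(D(H^c_I(M)))\cong H^c_I(M)$. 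Thus (iii) and (iv) have literally the same source, target and connecting maps, so their equivalence is formal once (i) $\Leftrightarrow$ (ii) is in hand; the converse implications close the cycle by specializing to $N=R$ and reusing the core argument of the third paragraph.
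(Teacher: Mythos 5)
Your architecture agrees with the paper's: the implication (i)$\Rightarrow$(ii) and the equivalences (ii)$\Leftrightarrow$(iii)$\Leftrightarrow$(iv) under the Artinian hypothesis are exactly the paper's Corollary \ref{2.2}(iii) and Corollary \ref{02}(2) (Matlis duality $D(H^i_I(N,Y))\cong U^I_i(N,D(Y))$ for finitely generated $N$, plus $L_i\Lambda^I(N,-)\cong U^I_i(N,-)$ on Artinian arguments from Proposition \ref{2.3}), so the whole weight rests on (ii)$\Rightarrow$(i) with $N=R$, just as in the paper.

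That is where your proposal has a genuine gap, and you flag it yourself: after deducing that ${\rm L}\Lambda^I$ applied to the Matlis dual of the truncation map is a quasi-isomorphism, you dualize a \emph{second} time and land on ${\rm R}\Gamma_I(D(D(M)))$, which you cannot compare with ${\rm R}\Gamma_I(M)$ because the biduality $M\to D(D(M))$ is not an isomorphism for an arbitrary module; the proposed remedy (``work on the $I$-torsion cohomology, on which biduality is invisible'') is an intention, not an argument. The fix is to stop one dualization earlier: $D=\Hom_R(-,E)$ is exact and \emph{faithful} over the local ring $R$, so a complex $X$ is acyclic as soon as $D(X)$ is. Hence once you have that $D$ of the truncation triangle is a quasi-isomorphism --- which is exactly what your completeness argument delivers --- the map $H^c_I(M)[-c]\to {\rm R}\Gamma_I(M)$ itself is a quasi-isomorphism and $H^i_I(M)=0$ for $i\neq c$; no biduality on $M$ is ever needed. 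This is precisely the mechanism of the paper's proof: from $H^i_I(D(C^{\cdot}_M(I)))=0$ it deduces, via a \v{C}ech-complex computation, that $\Tor^R_i(R/I,D(C^{\cdot}_M(I)))=0$, identifies this with $D(\Ext^i_R(R/I,C^{\cdot}_M(I)))$ by Hom--Tensor duality (Lemma \ref{445}), and concludes $\Ext^i_R(R/I,C^{\cdot}_M(I))=0$, hence $H^i_I(C^{\cdot}_M(I))=0$, by faithfulness of $D$. A secondary point to address if you keep the derived-category formulation: the natural maps in the statement are defined as direct limits of $\Ext^i_R(N/I^sN,-)$, and their identification with $H^i\,{\rm R}\Hom_R(N,{\rm R}\Gamma_I(-))$ is not automatic for non-finitely-generated $N$; this matters for (i)$\Rightarrow$(ii), which is asserted for all $N$, although there the paper's direct argument (exactness of $D(C^{\cdot}_M(I))$ forces $H^i_I(N,D(C^{\cdot}_M(I)))=0$) sidesteps the issue.
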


The natural homomorphisms of Theorems \ref{22a} and \ref{221a} are derived in Theorem \ref{2} and Corollary \ref{2.11}. In case of $N$ is finitely generated and $M$ is Artinian we are succeeded to prove that $L_{i}\Lambda^I(N, M)$ is isomorphic to $U_{i}^I(N, M)$ for all $i\in \mathbb{Z}$ (see Proposition \ref{2.3}). This actually generalizes the result of \cite[Propsition 4.1]{c1}. As a consequence of this we will prove that the sequence of functors $L_{i}\Lambda^I(N, M)$ is positive strongly connected (see Corollary \ref{1.1.1}). In Corollary \ref{31}, as an application of our main Theorem \ref{22a}, there are several characterization of grade and co-grade. For the definition and basic results of co-grade we refer to \cite{o}.

\section{Generalized homologies and cohomologies}

In this section we will fix the notations which will be used in the sequel of the paper. We will denote $R$ by a commutative Noetherian ring. Let $f: X\to Y$ be a morphism of complexes of $R$-modules. If the map $H^i(X)\to H^i(Y)$ induced by $f$ is an isomorphism for each $i\in \mathbb{Z}$. Then the $f$ is called a quasi-isomorphism. For our convenience we will write it as $f:X \qism Y$. Moreover for well-known results about homological algebra we recommend the reader to consult \cite{b} , \cite{har1} and \cite{w}.

Moreover note that throughout the paper we will deal with cochain complexes because it helps in dealing with indices issues.

Let $M$ and $N$ be arbitrary $R$-modules. For an ideal $I$ of $R$ let $H^i_I(M)$, $i\in \mathbb Z$, denote the local cohomology modules of $M$ with respect to $I$ (see \cite{goth} for its definition). In \cite{her} Herzog introduced the generalized local cohomology modules $H^i_{I}(N,M)$ as the direct limit of the direct system $\{\Ext_R^i(N/I^sN,M):i\in \mathbb{Z} \}$. Moreover T. Nam defined the generalized local homology modules $U^I_i(N,M)$ as the inverse limit of the inverse system $\{\Tor^R_{i}(N/I^sN, M):i\in \mathbb{Z} \}$ (see \cite{n1}). For a flat resolution $F_{\cdot}^R$ of $M$ T. Nam introduced the notion of the generalized completion homologies as
\[
L_i\Lambda^I (N,M):= H_i(\lim\limits_{\longleftarrow}(N/I^sN\otimes_R F_{\cdot}^R)) \text { for all $i\in \mathbb{Z}$. }
\]
((see \cite{n})). Note that $L_i\Lambda^I (N,M)$ are independent of the choice of a flat resolution of $M$. Since the tensor product is not left exact and the inverse limit is not right exact. Therefore, in general,
\[
L_0\Lambda^I (N,M)\neq \lim\limits_{\longleftarrow}(N/I^sN\otimes_R M).
\]
Clearly if $N=R$ then $L_i\Lambda^I (R,M)=L_i\Lambda^I (M)$ is the usual left derived functors of the completion. For more details about $L_i\Lambda^I (M)$ one should see \cite{g}.

Note that if $R$ is a local ring then we will consider the unique maximal ideal as $\mathfrak{m}$. Let $E= E_R(k)$ be the injective hull of the residue field $k=R/\mathfrak{m}$. Also $D(\cdot)=\Hom_R(\cdot,E)$ stands for the Matlis dual functor.

\begin{definition}
Let $I$ be an ideal of an arbitrary ring $R$ and $M$ an $R$-module such that $IM\neq M$. Then grade of $M$ is defined as:
\[
\grade(I,M)= \inf\{i\in \mathbb{Z}: H^i_I(M)\neq 0\}.
\]
\end{definition}

In the following we need the definition of cograde which is actually defined in \cite[Definition 3.10]{o}.

\begin{definition}
Let $N$ be an $R$-module. Then an element $x\in R$ is called co-regular if $\Ann_N(xR)\neq 0$. Moreover a sequence $\underline{x}=x_1,\cdots,x_r\in R$ is called co-regular sequence if the following conditions hold:
\begin{itemize}
\item[(i)] $\Ann_N(\underline{x}R)\neq 0$.

\item[(ii)] Each $x_i$ is an $\Ann_N((x_1,\cdots,x_{i-1})R)$-coregular element for all $i=1,\cdots, r$.
\end{itemize}
\end{definition}

Suppose that $N$ is a finitely generated $R$-module and $M$ is an Artinian $R$-module. Then the length of any maximal $M$-coregular sequence contained in $\Ann_R(N)$ is called $\Cograde_M(N)$. Note that it is well-defined see \cite[Definition 3.10]{o}.

\begin{definition}
Let $R$ be any ring (not necessarily local) and $I$ an ideal of $R$. Suppose that $M$ and $N$ are $R$-modules with $c=\grade(I,M)$. Then we say that $M$ is cohomologically complete intersection with respect to the pair $(N,I)$ if $H^{i}_{I}(N,M)=0$ for all $i\neq c$. Moreover for $N=R$ we call $M$ a cohomologically complete intersection with respect to $I$.
\end{definition}

In this paper we will investigate that $M$ is cohomologically complete intersection with respect to $I$. 

Since $ U^I_i(N,M)= \lim\limits_{\longleftarrow}\Tor^R_{i}(N/I^sN, M)$. So it naturally induces the following homomorphisms
\[
L_i\Lambda^I (N,M)\to U^I_i(N,M)
\]
for all $i\in \mathbb{Z}$. As a first result of the generalized completion homologies we will prove that these natural homomorphisms are surjective. Note the following Lemma comes from \cite[Proposition 1.1]{g}.

\begin{lemma} \label{1.3}
Let $R$ be an arbitrary ring and $I$ an ideal of $R$. Then for any $R$-modules $M$ and $N$ there is an exact sequence
\[
0\to \lim_{\longleftarrow}^1 \Tor^R_{i+1}(N/I^sN, M)\to L_i\Lambda^I (N,M)\to U^I_i(N,M)\to 0
\]
for each $i\in \mathbb{Z}$.
\end{lemma}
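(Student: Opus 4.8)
The plan is to realize both ends of the asserted sequence as the homology of one explicit inverse system of complexes and then to feed that system into the standard $\lim^1$ machinery. Write $C^{(s)}_{\cdot}=N/I^sN\otimes_R F_{\cdot}^R$, so that $\{C^{(s)}_{\cdot}\}_s$ is an inverse system of complexes whose transition maps are induced by the canonical surjections $N/I^{s+1}N\to N/I^sN$. By definition $L_i\Lambda^I(N,M)=H_i\big(\lim\limits_{\longleftarrow}C^{(s)}_{\cdot}\big)$, while, because $F_{\cdot}^R$ is a flat resolution of $M$, we have $H_i(C^{(s)}_{\cdot})=\Tor^R_i(N/I^sN,M)$, and hence $U^I_i(N,M)=\lim\limits_{\longleftarrow}H_i(C^{(s)}_{\cdot})$.

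First I would check that the transition maps of the system are surjective in each homological degree. This is immediate: the maps $N/I^{s+1}N\to N/I^sN$ are surjective, and tensoring a surjection with $F_n$ (indeed, with any module) again yields a surjection since the tensor product is right exact. Consequently the inverse system $\{C^{(s)}_n\}_s$ satisfies the Mittag--Leffler condition in every degree $n$, so $\lim\limits_{\longleftarrow}^1 C^{(s)}_n=0$. This is the crucial point that makes the remaining argument clean, and it is the one step that must not be overlooked: without it the map $1-\theta$ below need not be degreewise surjective and the short exact sequence of complexes would break down.

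Next I would form the short exact sequence of complexes
\[
0\to \lim\limits_{\longleftarrow}C^{(s)}_{\cdot}\to \prod_s C^{(s)}_{\cdot}\xrightarrow{1-\theta}\prod_s C^{(s)}_{\cdot}\to 0,
\]
where $\theta$ is the endomorphism of $\prod_s C^{(s)}_{\cdot}$ induced by the transition maps; exactness on the right in each degree is exactly the vanishing of $\lim\limits_{\longleftarrow}^1$ just established, while exactness on the left identifies the kernel of $1-\theta$ with the inverse limit. Passing to the associated long exact homology sequence and using that homology commutes with direct products, so that $H_i\big(\prod_s C^{(s)}_{\cdot}\big)=\prod_s \Tor^R_i(N/I^sN,M)$, produces the exact portion
\[
\prod_s \Tor^R_{i+1}\xrightarrow{1-\theta_*}\prod_s \Tor^R_{i+1}\to L_i\Lambda^I(N,M)\to \prod_s \Tor^R_i\xrightarrow{1-\theta_*}\prod_s \Tor^R_i,
\]
where I abbreviate $\Tor^R_j=\Tor^R_j(N/I^sN,M)$.

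Finally I would read off the statement. The cokernel of $1-\theta_*$ on $\prod_s \Tor^R_{i+1}(N/I^sN,M)$ is by definition $\lim\limits_{\longleftarrow}^1 \Tor^R_{i+1}(N/I^sN,M)$, and it coincides with the kernel of $L_i\Lambda^I(N,M)\to\prod_s\Tor^R_i$; the image of this latter map is the kernel of $1-\theta_*$ on $\prod_s \Tor^R_i(N/I^sN,M)$, which is $\lim\limits_{\longleftarrow}\Tor^R_i(N/I^sN,M)=U^I_i(N,M)$. Splicing these identifications into the long exact sequence yields precisely
\[
0\to \lim\limits_{\longleftarrow}^1 \Tor^R_{i+1}(N/I^sN,M)\to L_i\Lambda^I(N,M)\to U^I_i(N,M)\to 0,
\]
as required. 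Since the whole argument rests only on the standard $\lim^1$ exact sequence together with homology commuting with products, there is no genuine obstacle beyond the surjectivity check of the second paragraph; the role of that check is simply to guarantee that $\lim\limits_{\longleftarrow}C^{(s)}_{\cdot}$ computes the correct (derived) inverse limit.
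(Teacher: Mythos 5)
Your argument is correct and is exactly the standard Milnor/$\lim^1$ sequence proof that the paper itself invokes by deferring to \cite[Proposition 1.1]{g}; the surjectivity of the transition maps of $\{N/I^sN\otimes_R F_n\}_s$ (hence degreewise exactness of $0\to\lim\limits_{\longleftarrow}C^{(s)}_{\cdot}\to\prod_s C^{(s)}_{\cdot}\to\prod_s C^{(s)}_{\cdot}\to 0$) is indeed the key point, and the rest follows from the long exact homology sequence and the fact that homology commutes with products. Nothing further is needed.
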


\begin{proof}
It can be easily proved by following the similar steps of \cite[Proposition 1.1]{g}. We left it to the readers.
\end{proof}

In the sequel of the paper we will need the following Hom-Tensor Duality.

\begin{lemma}\label{445}
For a local ring $R$ let $M$ and $N$ be any $R$-modules. Then for each $i\in \mathbb{Z}$ the following hold:
\begin{itemize}
\item[(i)] $\Ext^{i}_R(N,D(M))\cong D(\Tor_{i}^R(N, M))$.

\item[(ii)] If $N$ is finitely generated then
\[
D(\Ext^{i}_R(N,M))\cong \Tor_{i}^R(N, D(M)).
\]
\end{itemize}
\end{lemma}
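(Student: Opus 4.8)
The plan is to exploit the exactness of the Matlis dual functor $D(\cdot)=\Hom_R(\cdot,E)$, which holds because $E=E_R(k)$ is injective. Exactness means that $D$ commutes with the formation of (co)homology: applying $D$ to a complex and then taking homology agrees, up to the reversal of arrows, with first taking homology and then dualizing. This reduces both isomorphisms to producing natural isomorphisms of the underlying complexes obtained from a single resolution $P_\bullet\to N$.

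For (i), I would choose a projective resolution $P_\bullet\to N$, so that $\Tor_i^R(N,M)=H_i(P_\bullet\otimes_R M)$. Applying the exact functor $D$ gives
\[
D(\Tor_i^R(N,M))\cong H^i\big(\Hom_R(P_\bullet\otimes_R M,E)\big).
\]
Now the standard Hom-tensor adjunction supplies a natural isomorphism of complexes
\[
\Hom_R(P_\bullet\otimes_R M,E)\cong \Hom_R\big(P_\bullet,\Hom_R(M,E)\big)=\Hom_R(P_\bullet,D(M)),
\]
whose cohomology is precisely $\Ext^i_R(N,D(M))$ since $P_\bullet$ resolves $N$. Because the adjunction isomorphism is natural in $P_\bullet$ it commutes with the differentials, hence induces the asserted isomorphism on (co)homology. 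Note that no finiteness hypothesis on $N$ is needed here.

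For (ii), since $R$ is Noetherian and $N$ is finitely generated I would take $P_\bullet\to N$ to consist of \emph{finitely generated} free modules. Then $\Ext^i_R(N,M)=H^i(\Hom_R(P_\bullet,M))$, and applying the exact functor $D$ yields $D(\Ext^i_R(N,M))\cong H_i\big(D(\Hom_R(P_\bullet,M))\big)$. The crucial device is the tensor-evaluation homomorphism
\[
P_\bullet\otimes_R D(M)\longrightarrow D\big(\Hom_R(P_\bullet,M)\big),\qquad p\otimes\phi\longmapsto\big(\psi\mapsto\phi(\psi(p))\big),
\]
which is natural and is an isomorphism whenever each term is finitely generated free: it reduces to the identity on the term $R$, and both functors are additive, so it is an isomorphism on finite direct sums of copies of $R$. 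Since the left-hand complex computes $\Tor_i^R(N,D(M))$, passing to homology gives the claim.

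The main obstacle, and the only place the hypotheses genuinely bite, is part (ii): the tensor-evaluation map fails to be an isomorphism for arbitrary free modules, so the finite generation of $N$ together with the Noetherian hypothesis, which guarantees a resolution by finitely generated frees, is exactly what is required. Part (i) is essentially formal, resting only on the injectivity of $E$ and the adjunction. Throughout I would take care to check that each isomorphism is natural in the module variable so that it respects the boundary maps of $P_\bullet$ and therefore descends to (co)homology.
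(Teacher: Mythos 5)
Your proof is correct and complete; both parts are the standard Hom--tensor adjunction and tensor-evaluation arguments, with the finiteness hypothesis correctly isolated as the point where part (ii) genuinely needs it. The paper itself gives no argument here, deferring entirely to a citation of Huneke's lectures, so your write-up simply fills in the standard proof that the cited reference contains.
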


\begin{proof}
For the proof see \cite[Example 3.6]{h}.
\end{proof}

In the next result we will prove that the above natural homomorphism $L_i\Lambda^I (N,M)\to U^I_i(N,M)$ is an isomorphism for each $i\in \mathbb{Z}$ under the additional assumption of $M$ is an Artinian $R$-module and $N$ is a finitely generated $R$-module. Note that a similar result is obtained in \cite[Theorems 3.2 adn 3.6]{n}. It is well-known that if $N=R$ then this homomorphism is an isomorphism (see \cite[Propsition 4.1]{c1}).

\begin{proposition}\label{2.3}
Let $I$ be an ideal over an arbitrary ring $R$. Let $N$ be a finitely generated $R$-module. Then we have:
\begin{itemize}
\item[(i)] If $M$ is an Artinian $R$-module then the natural homomorphism
\[
L_i\Lambda^I (N,M)\to U^I_i(N,M)
\]
is an isomorphism for each $i\in \mathbb{Z}$. Moreover if $R$ is local then $D(H^i_{I}(N,D(M)))\cong U^I_i(N,M)$ for each $i\in \mathbb{Z}$.

\item[(ii)] If $M$ is any $R$-module and $R$ is local. Then the natural homomorphism
\[
L_i\Lambda^I (N,D(M))\to U^I_i(N,D(M))
\]
is an isomorphism for each $i\in \mathbb{Z}$. Moreover $D(H^i_{I}(N,M))\cong U^I_i(N,D(M))$ for each $i\in \mathbb{Z}$.
\end{itemize}
\end{proposition}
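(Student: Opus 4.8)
The plan is to deduce every assertion from Lemma \ref{1.3}, which presents the natural map $L_i\Lambda^I(N,M)\to U^I_i(N,M)$ as a surjection with kernel $\lim_{\longleftarrow}^1\Tor^R_{i+1}(N/I^sN,M)$. Thus in both (i) and (ii) the isomorphism statement reduces entirely to showing that the relevant $\lim_{\longleftarrow}^1$ term vanishes, and once this is done the two ``Moreover'' identities will follow by a direct computation with the Hom--Tensor duality of Lemma \ref{445} together with the fact that $D$ converts direct limits into inverse limits.

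For part (i) I would first note that, since $N$ is finitely generated over the Noetherian ring $R$, each $N/I^sN$ admits a resolution by finitely generated free modules; tensoring with the Artinian module $M$ gives a complex of Artinian modules, so every $\Tor^R_j(N/I^sN,M)$ is Artinian. An inverse system of Artinian modules satisfies the Mittag--Leffler condition, because for fixed $s$ the descending chain of submodules $\im(\Tor^R_{i+1}(N/I^tN,M)\to\Tor^R_{i+1}(N/I^sN,M))$, $t\ge s$, must stabilize. Hence $\lim_{\longleftarrow}^1\Tor^R_{i+1}(N/I^sN,M)=0$ and Lemma \ref{1.3} gives the first isomorphism. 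For the ``Moreover'' part, with $R$ local, I would use Lemma \ref{445}(i) to rewrite $\Ext^i_R(N/I^sN,D(M))\cong D(\Tor^R_i(N/I^sN,M))$ naturally in $s$; applying $D$ and using $D(\lim_{\longrightarrow}(-))\cong\lim_{\longleftarrow}D(-)$ yields $D(H^i_I(N,D(M)))\cong\lim_{\longleftarrow}D(D(\Tor^R_i(N/I^sN,M)))$. As each $\Tor^R_i(N/I^sN,M)$ is Artinian, Matlis double duality gives $D(D(-))\cong\mathrm{id}$ on these modules, and the right-hand side collapses to $\lim_{\longleftarrow}\Tor^R_i(N/I^sN,M)=U^I_i(N,M)$.

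For part (ii) the module $D(M)$ need not be Artinian, so the Mittag--Leffler route is unavailable and a different argument for $\lim_{\longleftarrow}^1=0$ is required. Here I would invoke Lemma \ref{445}(ii): since each $N/I^sN$ is finitely generated, $\Tor^R_{i+1}(N/I^sN,D(M))\cong D(\Ext^{i+1}_R(N/I^sN,M))$ naturally in $s$, so the inverse system $\{\Tor^R_{i+1}(N/I^sN,D(M))\}_s$ is the Matlis dual of the direct system $\{\Ext^{i+1}_R(N/I^sN,M)\}_s$. Applying the exact functor $D$ to the canonical presentation
\[
0\to\bigoplus_s\Ext^{i+1}_R(N/I^sN,M)\xrightarrow{\theta}\bigoplus_s\Ext^{i+1}_R(N/I^sN,M)\to H^{i+1}_I(N,M)\to 0
\]
of the direct limit, and using that $D$ sends $\bigoplus$ to $\prod$, produces an exact sequence whose transpose $\theta^{\ast}$ is surjective; but $\theta^{\ast}$ is precisely the map $1-\text{shift}$ whose cokernel computes $\lim_{\longleftarrow}^1$ of the dual inverse system, so $\lim_{\longleftarrow}^1\Tor^R_{i+1}(N/I^sN,D(M))=0$. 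Lemma \ref{1.3} then gives the isomorphism, and the ``Moreover'' identity $D(H^i_I(N,M))\cong U^I_i(N,D(M))$ follows by applying $D$ to $H^i_I(N,M)=\lim_{\longrightarrow}\Ext^i_R(N/I^sN,M)$ and invoking Lemma \ref{445}(ii) once more.

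The main obstacle I anticipate is the bookkeeping of naturality: each identification above (from Lemma \ref{445} and from Matlis double duality) must be checked to be compatible with the transition maps induced by the surjections $N/I^{s+1}N\to N/I^sN$, so that it is a genuine isomorphism of direct or inverse systems and not merely a termwise isomorphism. Granting this, the only substantive analytic point is the vanishing of $\lim_{\longleftarrow}^1$ in part (ii); the key observation there is that the Matlis dual of a direct system indexed by $\mathbb{N}$ automatically has vanishing first derived inverse limit, because $D$ is exact and carries the injective presentation map $\theta$ to a surjection.
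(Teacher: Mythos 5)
Your proposal is correct and follows essentially the same route as the paper's own proof: part (i) via Artinianness of the $\Tor$ modules, Mittag--Leffler, and Lemma \ref{1.3}, with the ``Moreover'' identity obtained from Lemma \ref{445} plus Matlis double duality on Artinian modules; part (ii) via dualizing the canonical $\bigoplus$-presentation of the direct limit to exhibit $\lim\limits_{\longleftarrow}$ and $\lim\limits_{\longleftarrow}^1$ of the dual inverse system as the kernel and (vanishing) cokernel of the transposed map. The only cosmetic difference is that in (i) you invoke Lemma \ref{445}(i) and double-dualize the $\Tor$ modules, whereas the paper applies Lemma \ref{445}(ii) directly to $D(M)$ and uses $D(D(M))\cong M$; these are interchangeable.
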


\begin{proof}
To prove the statement in $(i)$ we will follow the technique of \cite[Proposition 4.1]{c1}. Since $N$ is a finitely generated module and $M$ is an Artinian module over a Noetherian ring $R$. Then it follows that $\Tor^R_{i+1}(N/I^sN, M)$ is an Artinian $R$-module for all $i\in \mathbb{Z}$. Therefore $\lim\limits_{\longleftarrow}^1 \Tor^R_{i+1}(N/I^sN, M)=0$ for all $i\in \mathbb{Z}$. This proves that the natural homomorphism
\[
L_i\Lambda^I (N,M)\to U^I_i(N,M)
\]
is an isomorphism in view of Lemma \ref{1.3}. Note that the Hom functor transforms the direct systems into the inverse systems in the first variable. So by Lemma \ref{445} we have
\[
D(H^i_{I}(N,D(M)))\cong \lim_{\longleftarrow}\Hom_R(\Ext_R^{i}(N/I^sN, D(M)),E)\cong \lim_{\longleftarrow} \Tor^R_{i}(N/I^sN, M)
\]
for all $i\in \mathbb{Z}$. To this end note that $D(D(M))\cong M$ sine $M$ is Artinian. This finishes the proof of $(i)$.

Now we prove the statement in $(ii)$. To do this suppose that $M$ is any $R$-module and $R$ is local. By definition of the direct limit there is a short exact sequence
\[
0\to \bigoplus_{s\in \mathbb{N}} \Ext_R^{i}(N/I^sN, M)\to \bigoplus_{s\in \mathbb{N}} \Ext_R^{i}(N/I^sN, M)\to H^i_I(N,M)\to 0
\]
for each $i\in \mathbb{Z}$. Since $N$ is finitely generated so the application of the Matlis dual functor to the last sequence induces the following exact sequence
\[
0\to D(H^i_I(N,M))\to \prod_{s\in \mathbb{N}} \Tor^R_{i}(N/I^sN, D(M))\mathop\to\limits^{\Psi_i} \prod_{s\in \mathbb{N}} \Tor^R_{i}(N/I^sN, D(M))\to 0
\]
To this end note that $D(\Ext_R^{i}(N/I^sN, M))\cong \Tor^R_{i}(N/I^sN, D(M))$ (see Lemma \ref{445}) and it transforms the direct system $\{\Ext_R^{i}(N/I^sN, M):i\in \mathbb{N}\}$ into the following inverse system
\[
\{\Tor^R_{i}(N/I^sN, D(M)):i\in \mathbb{N}\}.
\]
Now by \cite[Definition 3.5.1]{w} it follows that
\[
\lim_{\longleftarrow} \Tor^R_{i}(N/I^sN, D(M))\cong D(H^i_I(N,M)) \text { and } \lim_{\longleftarrow}^1 \Tor^R_{i}(N/I^sN, D(M))\cong \coker \Psi_i=0
\]
for all $i\in \mathbb{Z}$. On the other side by Lemma \ref{1.3} we have the following short exact sequence
\[
0\to \lim_{\longleftarrow}^1 \Tor^R_{i+1}(N/I^sN, D(M))\to L_i\Lambda^I (N,D(M))\to U^I_i(N,D(M))\to 0
\]
Hence the homomorphism $L_i\Lambda^I (N,D(M))\to U^I_i(N,D(M))$ becomes an isomorphism for all $i\in \mathbb{Z}$. This completes the proof of the Proposition.
\end{proof}

The next Corollary shows that the sequence of the functors $\{L_i\Lambda^I (N,M):i\in \mathbb{Z}\}$ is positive strongly connected on the category of Artinian $R$-modules (see \cite[p. 212]{r}).

\begin{corollary}\label{1.1.1}
Suppose that $I$ is an ideal of a local ring $R$. Consider the following short exact sequence of any $R$-modules
\[
0\to M_1\to M_2\to M_3\to 0.
\]
Then for a finitely generated $R$-module $N$ we have the following results:
\begin{itemize}
\item[(i)] There is a long exact sequence of generalized completion homologies as follows:
\begin{gather*}
\cdots \to L_{i}\Lambda^I(N, D(M_3))\to L_{i}\Lambda^I(N, D(M_2))\to L_{i}\Lambda^I(N, D(M_1))\to\\
L_{i-1}\Lambda^I(N, D(M_3))\to \cdots \to L_{0}\Lambda^I(N, D(M_2))\to L_{0}\Lambda^I(N, D(M_1))\to 0
\end{gather*}
\item [(ii)] Suppose that $M_i$'s is Artinian for each $i=1,2,3$. Then there is a long exact sequence of generalized completion homologies as follows:
\begin{gather*}
\cdots \to L_{i}\Lambda^I(N, M_1)\to L_{i}\Lambda^I(N, M_2)\to L_{i}\Lambda^I(N, M_3)\to\\
L_{i-1}\Lambda^I(N, M_1)\to \cdots \to L_{0}\Lambda^I(N, M_2)\to L_{0}\Lambda^I(N, M_3)\to 0
\end{gather*}
\end{itemize}
\end{corollary}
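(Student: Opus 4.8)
The plan is to reduce both statements to the familiar long exact sequence of Herzog's generalized local cohomology in its module argument, transported across Matlis duality by means of Proposition \ref{2.3}. The crucial input is that $H^i_I(N,-)$, being a direct limit of the cohomological $\delta$-functors $\Ext_R^i(N/I^sN,-)$ and direct limits being exact, is itself a cohomological $\delta$-functor in the second variable; hence every short exact sequence of $R$-modules yields a long exact sequence in generalized local cohomology. Since $E$ is injective, the Matlis dual $D(\cdot)$ is an exact contravariant functor, so it carries long exact sequences to long exact sequences with the arrows reversed.

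For part (i), I would apply $H^i_I(N,-)$ to the given sequence $0\to M_1\to M_2\to M_3\to 0$ to obtain the long exact sequence
\[
\cdots \to H^{i-1}_I(N,M_3)\to H^i_I(N,M_1)\to H^i_I(N,M_2)\to H^i_I(N,M_3)\to H^{i+1}_I(N,M_1)\to\cdots,
\]
which starts in degree $0$ because $H^i_I(N,-)=0$ for $i<0$. Applying $D(\cdot)$ reverses the arrows and, by the second isomorphism of Proposition \ref{2.3}(ii), namely $D(H^i_I(N,M))\cong U^I_i(N,D(M))\cong L_i\Lambda^I(N,D(M))$, each term $D(H^i_I(N,M_j))$ is identified with $L_i\Lambda^I(N,D(M_j))$. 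This produces precisely the asserted sequence, the connecting map $L_i\Lambda^I(N,D(M_1))\to L_{i-1}\Lambda^I(N,D(M_3))$ being the dual of the connecting map $H^{i-1}_I(N,M_3)\to H^i_I(N,M_1)$, while the vanishing in negative degrees forces termination at $L_0\Lambda^I(N,D(M_2))\to L_0\Lambda^I(N,D(M_1))\to 0$.

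For part (ii), where the $M_j$ are Artinian, I would first dualize the short exact sequence. Exactness of $D(\cdot)$ gives $0\to D(M_3)\to D(M_2)\to D(M_1)\to 0$, to which I apply the $\delta$-functor $H^i_I(N,-)$ and then $D(\cdot)$ once more. Now I invoke Proposition \ref{2.3}(i): since each $M_j$ is Artinian, $L_i\Lambda^I(N,M_j)\cong U^I_i(N,M_j)\cong D(H^i_I(N,D(M_j)))$. Substituting these identifications into the doubly dualized long exact sequence yields the desired sequence, with connecting homomorphism $L_i\Lambda^I(N,M_3)\to L_{i-1}\Lambda^I(N,M_1)$ and terminal segment $L_0\Lambda^I(N,M_2)\to L_0\Lambda^I(N,M_3)\to 0$.

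The routine parts, namely exactness of $D$ and of direct limits together with the vanishing of the negative-degree terms, are immediate. The point requiring care is the bookkeeping of arrow directions and the index shift in the connecting homomorphisms, since two contravariant operations (the passage to $D(M_j)$ in part (ii) and the final application of $D$) interact with the homological versus cohomological grading. The only genuine structural ingredient beyond these formalities is the naturality of the isomorphisms in Proposition \ref{2.3}, which is needed to guarantee that they are compatible with the maps induced by $0\to M_1\to M_2\to M_3\to 0$; granting this, the two long exact sequences are simply reindexed Matlis duals of the standard generalized local cohomology long exact sequence.
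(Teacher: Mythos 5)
Your argument is correct, but it is not the one the paper writes down. The paper's proof is a single sentence: it invokes the fact that $L_i\Lambda^I(N,-)$ is the $i$th left derived functor of $M\mapsto \lim\limits_{\longleftarrow}(N/I^sN\otimes_R F_{\cdot}^R)$, i.e.\ it implicitly runs the horseshoe lemma on a short exact sequence of flat (free) resolutions and then passes to the inverse limit; the point that makes this work, which the paper does not spell out, is that the transition maps $N/I^{s+1}N\otimes_R F_i\to N/I^sN\otimes_R F_i$ are surjective, so the Mittag--Leffler condition kills $\lim\limits_{\longleftarrow}^1$ and the inverse limit of the degreewise split short exact sequences of complexes remains exact. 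That route actually proves the long exact sequence for an arbitrary short exact sequence of modules over an arbitrary Noetherian ring, without the local, finitely generated, Artinian, or Matlis-dual hypotheses. Your route instead identifies $L_i\Lambda^I(N,D(M))$ and $L_i\Lambda^I(N,M)$ (for $M$ Artinian) with $D(H^i_I(N,-))$ via Proposition \ref{2.3} and transports the $\delta$-functor structure of $H^i_I(N,-)$ across the exact functor $D$; this is the reading suggested by the introduction's phrase ``as a consequence of this,'' it explains why the corollary carries exactly the hypotheses of Proposition \ref{2.3}, and it sidesteps the $\lim\limits_{\longleftarrow}^1$ issue entirely, at the price of needing the naturality in $M$ of the isomorphisms $L_i\Lambda^I(N,D(M))\cong U^I_i(N,D(M))\cong D(H^i_I(N,M))$ --- which you correctly flag and which does hold, since the Hom--Tensor duality of Lemma \ref{445} and the surjection of Lemma \ref{1.3} are both natural. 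Your index bookkeeping (arrow reversal under $D$, the connecting map $L_i\Lambda^I(N,D(M_1))\to L_{i-1}\Lambda^I(N,D(M_3))$ as the dual of $H^{i-1}_I(N,M_3)\to H^i_I(N,M_1)$, and the terminal segments in degree $0$) checks out in both parts. In short: a valid proof, less general but more self-contained than the paper's, and arguably the one the author intended.
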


\begin{proof}
It is an immediate consequence of the fact that $L_{i}\Lambda^I(N, M)$ is the $i$th left derived functor of the complex $\lim\limits_{\longleftarrow}(N/I^sN\otimes_R F_{\cdot}^R)$. Here $F_{\cdot}^R$ denotes a flat resolution of $M$.
\end{proof}

In order to describe the next result we will need a preparation of Koszul complex. Let $x_1,\ldots, x_r \in I$ be a generating system of $I$. Now  consider the Koszul complex $K^{\cdot}_{\underline{x}^t}$ with respect to ${\underline{x}^t}=x_1^t,\ldots, x_r^t$ (for more details about Koszul complex see \cite{m}). Suppose that $F_{\cdot}^R$ denote a projective resolution of a finitely generated $R$-module $N$. Let $C^{\cdot}_{t}$ be the total complex
associated to the double complex $K^{\cdot}_{\underline{x}^t}\otimes_R F_{\cdot}^R$. By \cite[Satz 1.1.6]{her} for any $R$-module $M$ there are the isomorphisms
\[
H^{i}_I(N, M)\cong \lim_{\longrightarrow}H^i(\Hom_R(C^{\cdot}_{t},M)).
\]
In the following we will relate the generalized completion homologies $L_i\Lambda^I (N,M)$ with the inverse limits of the homologies of the dual complex $C^{\cdot}_{t}\otimes_R M$. Note that the next result is a consequence of \cite[Theorem 5.1]{b2}.

\begin{lemma}
Let $R$ be a local ring and $I$ an ideal of $R$. Let $N$ be a finitely generated $R$-module and $M$ an Artinian $R$-module. Then there is an isomorphism
\[
L_i\Lambda^I (N,M)\cong \lim_{\longleftarrow}H^i(C^{\cdot}_{t}\otimes_R M)
\]
for all $i\in \mathbb{Z}$.
\end{lemma}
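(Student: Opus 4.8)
The plan is to identify the complex $C^{\cdot}_{t}$ with an object whose cohomology already computes the generalized constructions, and then to commute the Matlis dual with the tensor and limit operations so that the stated formula falls out of \cite[Theorem 5.1]{b2} combined with Proposition \ref{2.3}. Recall that $C^{\cdot}_{t}$ is the total complex of $K^{\cdot}_{\underline{x}^t}\otimes_R F_{\cdot}^R$, where $F_{\cdot}^R$ is a projective resolution of the finitely generated module $N$; since each Koszul term is finite free and $N$ is finitely generated, every term of $C^{\cdot}_{t}$ is a finitely generated free $R$-module. This finiteness is what lets the Matlis dual functor behave well. The governing identity is \cite[Satz 1.1.6]{her}, namely $H^{i}_I(N,D(M))\cong \lim_{\longrightarrow}H^i(\Hom_R(C^{\cdot}_{t},D(M)))$, which we will dualize.

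First I would apply Hom-Tensor duality at the level of complexes. Because each $C^{j}_{t}$ is finitely generated free, there is a natural isomorphism of complexes $\Hom_R(C^{\cdot}_{t},D(M))\cong D(C^{\cdot}_{t}\otimes_R M)$, using the standard adjunction $\Hom_R(C,\Hom_R(M,E))\cong\Hom_R(C\otimes_R M,E)$. Since $D(\cdot)=\Hom_R(\cdot,E)$ is exact (as $E$ is injective), taking cohomology commutes with it, giving $H^i(\Hom_R(C^{\cdot}_{t},D(M)))\cong D(H^i(C^{\cdot}_{t}\otimes_R M))$ for every $i$ and every $t$. Next I would pass to the limit: the transition maps $\underline{x}^{t+1}\to\underline{x}^t$ turn the direct system on the Hom side into an inverse system on the tensor side, and since $D$ converts direct limits of the relevant diagram into inverse limits (as in the argument of Proposition \ref{2.3}), one obtains
\[
D(H^i_I(N,D(M)))\cong \lim_{\longleftarrow}D(H^i(\Hom_R(C^{\cdot}_{t},D(M))))\cong \lim_{\longleftarrow}H^i(C^{\cdot}_{t}\otimes_R M).
\]
It remains to recognize the left-hand side. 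By Proposition \ref{2.3}(i), since $M$ is Artinian and $N$ is finitely generated, $D(H^i_I(N,D(M)))\cong U^I_i(N,M)\cong L_i\Lambda^I(N,M)$ for all $i$, which is exactly the asserted isomorphism.

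The main obstacle I anticipate is the limit-commutation step: justifying that the Matlis dual turns the direct limit over $t$ into the inverse limit and that this inverse limit commutes with cohomology, i.e. that no $\lim_{\longleftarrow}^1$ obstruction intervenes. The safe route is to lean on the Artinian hypothesis exactly as in Proposition \ref{2.3}: the modules $H^i(C^{\cdot}_{t}\otimes_R M)$ are subquotients of finite direct sums of copies of $M$, hence Artinian, so the inverse system satisfies the Mittag-Leffler condition and $\lim_{\longleftarrow}^1$ vanishes, allowing the interchange of $\lim_{\longleftarrow}$ with $H^i$. One must also check that the identification of complexes is compatible with the transition maps and with the double-complex differential, so that the isomorphisms assemble into a morphism of inverse systems; this is a routine but slightly tedious naturality verification. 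With these in hand, the result is a direct combination of \cite[Satz 1.1.6]{her}, \cite[Theorem 5.1]{b2}, Lemma \ref{445}, and Proposition \ref{2.3}.
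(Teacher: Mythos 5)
Your argument is correct, but it is not the paper's proof: the paper disposes of the lemma in one line by citing \cite[Theorem 5.1]{b2} for the identification $U^I_i(N,M)\cong \lim_{\longleftarrow}H^i(C^{\cdot}_{t}\otimes_R M)$ and Proposition \ref{2.3} for $L_i\Lambda^I(N,M)\cong U^I_i(N,M)$. You keep the second ingredient but replace the external citation by an actual derivation, Matlis-dualizing Herzog's formula $H^{i}_I(N,D(M))\cong \lim_{\longrightarrow}H^i(\Hom_R(C^{\cdot}_{t},D(M)))$ from \cite[Satz 1.1.6]{her}. The chain you write is sound: each term of $C^{\cdot}_{t}$ is finitely generated free, so $\Hom_R(C^{\cdot}_{t},D(M))\cong D(C^{\cdot}_{t}\otimes_R M)$ by adjunction, exactness of $D$ passes this to cohomology, $D$ turns the direct limit over $t$ into an inverse limit, and the leftover double dual $D(D(H^i(C^{\cdot}_{t}\otimes_R M)))$ collapses because these modules are Artinian, hence Matlis reflexive --- this is where the Artinian hypothesis on $M$ genuinely enters, exactly as in the proof of Proposition \ref{2.3}(i). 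What your route buys is self-containedness (the statement of \cite[Theorem 5.1]{b2} is never reproduced in the paper, so the reader cannot check the hypotheses match); what it costs is some care with the cohomological indexing under $D$, which reverses complexes. Two small corrections: your worry about a $\lim_{\longleftarrow}^1$ obstruction is misplaced, since the isomorphism $D(\lim_{\longrightarrow}A_t)\cong\lim_{\longleftarrow}D(A_t)$ holds for any direct system (no Mittag--Leffler condition is needed), and the limit in the statement already sits outside the cohomology, so no interchange of $\lim_{\longleftarrow}$ with $H^i$ is required; the only place the Artinianness is indispensable is the reflexivity step (and in Proposition \ref{2.3}(i) itself). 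The naturality of the adjunction isomorphisms with respect to the transition maps $\underline{x}^{t+1}\to\underline{x}^{t}$, which you flag as tedious, is indeed the only remaining routine verification.
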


\begin{proof}
For the proof we refer to \cite[Theorem 5.1]{b2} and Proposition \ref{2.3}.
\end{proof}

\subsection{Some natural homomorphisms}
In this section we will derive some natural homomorphisms of the modules $H^i_{I}(N,M)$,  $U^I_i(N,M)$ and $L_i\Lambda^I (N,M)$. Before this we recall the definition of the truncation complex which was introduced in \cite[Definition 4.1]{p2}. Let $E^{\cdot}_R(M)$ be a minimal injective resolution of an $R$-module $M$ and $\grade (I,M)= c$. Note that
\[
E^{\cdot}_R(M)^i\cong\bigoplus\limits_{{\mathfrak p}\in \Supp M}E_R(R/{\mathfrak p})^{\mu_{i}({\mathfrak p}, M)},
\]
where $\mu_{i}({\mathfrak p}, M)=\dim_{k({\mathfrak p})}(\Ext^i_{R_{\mathfrak p}}(k({\mathfrak p}),M_{\mathfrak p}))$.
Since $\Gamma_{I}(E_R(R/{\mathfrak p}))=0$ for all $\mathfrak p\notin V(I)$ and $\Gamma_{I}(E_R(R/{\mathfrak p}))= E_R(R/{\mathfrak p})$ for all $\mathfrak p\in V(I)$. It follows that for all $i< c$ we have
\[
\Gamma_I(E^{\cdot}_R(M))^{i}= 0.
\]
Then there is a natural embedding of complexes $H^c_I(M)[-c]\rightarrow \Gamma_I(E^{\cdot}_R(M))$. The cokernel $C^{\cdot}_M(I)$ of this embedding is called the truncation complex of $R$ with respect to $I$. Therefore there is an exact sequence of complexes
\[
0\to H^c_I(M)[-c]\to \Gamma_I(E^{\cdot}_R(M))\to C^{\cdot}_M(I)\to 0.
\]
Moreover note that $H^i(C^{\cdot}_M(I))=0$ for all $i\leq c$ and $H^i(C^{\cdot}_M(I))\cong  H^i_I(M)$ for all $i> c$.

Let $\underline{x}= x_1, \ldots ,x_r\in I$ denote a system of elements of $R$ such that $\Rad I= \Rad(\underline{x})R$.
We consider the \v{C}ech complex $\Check{C}_{\underline{x}}$ with respect to $\underline{x}= x_1,...,x_r$. That is
\[
\Check{C}_{\underline{x}}= \mathop\otimes\limits_{i}^r \Check{C}_{x_i},
\]
where $\Check{C}_{x_i}$ is the complex $0\rightarrow R\rightarrow R_{x_i}\rightarrow 0$.

As a first application of the truncation complex we will prove the following result:

\begin{theorem}\label{2}
Let $R$ be any ring and $M$ an $R$-module. Let $I$ be an ideal of $R$ with $\grade(I,M)=c.$ Then for any $R$-module $N$ the following conditions hold:
\begin{itemize}
\item[(i)] There are the natural homomorphisms
\[
H^i_I(N,H^c_I(M))\to H^{i+c}_I(N, M)
\]
for all $i\in \mathbb{Z}$. Moreover these are isomorphisms for all $i\in \mathbb{Z}$ if and only if $H^i_I(N,C^{\cdot}_M(I))= 0$ for all $i\in \mathbb{Z}$.

\item[(ii)] Suppose that $R$ is local. Then there are the natural homomorphisms
\[
H^i_{I}(N,D(M))\to H^{i+c}_{I}(N,D(H^c_I(M)))
\]
for all $i\in \mathbb{Z}$. Moreover these are isomorphisms if and only if $H^i_{I}(N,D(C^{\cdot}_M(I)))=0$ for all $i\in \mathbb{Z}.$
\end{itemize}
\end{theorem}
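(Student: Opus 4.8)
The plan is to apply the generalized local cohomology $H^\bullet_I(N,-)$, regarded on complexes as the hyper-functor $\lim\limits_{\longrightarrow}\Ext^\bullet_R(N/I^sN,-)$, to the truncation short exact sequence of complexes recalled just above the theorem. Since $\lim\limits_{\longrightarrow}$ is exact and a short exact sequence of complexes $0\to A^\cdot\to B^\cdot\to C^\cdot\to 0$ is a distinguished triangle, it induces a long exact sequence in $H^\bullet_I(N,-)$ for an \emph{arbitrary} $R$-module $N$. The two desired natural homomorphisms will be the maps appearing in this long exact sequence, and the isomorphism criteria will be read off directly from it.

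For $(i)$, applying $H^\bullet_I(N,-)$ to $0\to H^c_I(M)[-c]\to \gam(E^\cdot_R(M))\to C^\cdot_M(I)\to 0$ and accounting for the shift $[-c]$ yields a long exact sequence
\[
\cdots\to H^i_I(N,H^c_I(M))\xrightarrow{\alpha_i} H^{i+c}_I(N,\gam(E^\cdot_R(M)))\to H^{i+c}_I(N,C^\cdot_M(I))\to\cdots,
\]
in which $\alpha_i$ is induced by the truncation embedding $H^c_I(M)[-c]\hookrightarrow\gam(E^\cdot_R(M))$. I would then identify $H^{i+c}_I(N,\gam(E^\cdot_R(M)))\cong H^{i+c}_I(N,M)$: the complex $\gam(E^\cdot_R(M))$ consists of injectives, and since every homomorphism from the $I^s$-torsion module $N/I^sN$ into $E^\cdot_R(M)$ has image in the $I$-torsion subcomplex, one gets $\Hom_R(N/I^sN,E^\cdot_R(M))=\Hom_R(N/I^sN,\gam(E^\cdot_R(M)))$. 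Taking cohomology and the direct limit recovers $\lim\limits_{\longrightarrow}\Ext^{i+c}_R(N/I^sN,M)=H^{i+c}_I(N,M)$. The long exact sequence then shows that all $\alpha_i$ are isomorphisms exactly when every flanking term $H^\bullet_I(N,C^\cdot_M(I))$ vanishes.

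For $(ii)$, since $D=\Hom_R(-,E)$ is exact and contravariant I would first dualize the truncation sequence to the short exact sequence of complexes $0\to D(C^\cdot_M(I))\to D(\gam(E^\cdot_R(M)))\to D(H^c_I(M))[c]\to 0$, and then apply $H^\bullet_I(N,-)$, which produces the natural homomorphism $H^i_I(N,D(\gam(E^\cdot_R(M))))\to H^{i+c}_I(N,D(H^c_I(M)))$ with obstruction $H^\bullet_I(N,D(C^\cdot_M(I)))$. The crux is the identification $H^i_I(N,D(\gam(E^\cdot_R(M))))\cong H^i_I(N,D(M))$. For each $s$, the derived form of the Hom--tensor (Matlis) duality underlying Lemma \ref{445}(i) gives $\Ext^i_R(N/I^sN,D(\gam(E^\cdot_R(M))))\cong D\big(H_i(N/I^sN\lo\gam(E^\cdot_R(M)))\big)$. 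Modelling $\gam(E^\cdot_R(M))$ by the \v{C}ech complex $\Cech\otimes_R M$ and using that localizing the $I$-torsion module $N/I^sN$ at any $x_i\in I$ kills it, associativity of the derived tensor product collapses $N/I^sN\lo\gam(E^\cdot_R(M))$ to $N/I^sN\lo M$; hence the term equals $D(\Tor^R_i(N/I^sN,M))\cong\Ext^i_R(N/I^sN,D(M))$ by Lemma \ref{445}(i). Taking the direct limit over $s$ places $H^i_I(N,D(M))$ in the middle of the long exact sequence, and the isomorphism criterion follows as in $(i)$.

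The step I expect to be the main obstacle is this last middle-term identification in $(ii)$, the single place where one must leave the level of modules: it requires the derived Hom--tensor duality, the \v{C}ech computation collapsing $N/I^sN\lo\gam(E^\cdot_R(M))$ to $N/I^sN\lo M$, and care with the homological degree conventions under the contravariant $D$. By contrast the identification in $(i)$ is elementary. Finally I would verify the naturality assertions, namely that the homomorphisms extracted from the two long exact sequences genuinely coincide with the intended natural maps; this is immediate once one observes that they are induced by the truncation embedding $H^c_I(M)[-c]\hookrightarrow\gam(E^\cdot_R(M))$ and by its Matlis dual.
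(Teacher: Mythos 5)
Your proposal is correct and follows essentially the same route as the paper's proof: both apply $\Hom_R(F_{\cdot}(N/I^sN),-)$ (i.e.\ the hyper-functor computing $H^\bullet_I(N,-)$) to the truncation sequence and its Matlis dual, identify the middle term with $H^{i+c}_I(N,M)$ via the torsion argument in $(i)$ and with $H^i_I(N,D(M))$ via Hom--tensor duality plus the \v{C}ech-complex collapse of $F_{\cdot}(N/I^sN)\otimes_R\Gamma_I(E^{\cdot}_R(M))$ in $(ii)$, and then pass to the (exact) direct limit over $s$. The step you flag as the main obstacle is exactly the one the paper spends most of its effort on, and your sketch of it matches the paper's chain of quasi-isomorphisms.
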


\begin{proof}
Let $F_{\cdot}(N/I^sN)$ be a free resolution of $N/I^sN$ for $s\in \mathbb{N}$. Apply the functor $\Hom_R(F_{\cdot}(N/I^sN), .)$ to the short exact sequence of the truncation complex. Then it induces the following short exact sequences of complexes of $R$-modules
\begin{gather*}
0\rightarrow \Hom_R(F_{\cdot}(N/I^sN), H^c_I(M))[-c]\rightarrow \Hom_R(F_{\cdot}(N/I^sN), \Gamma_I(E^{\cdot}_R(M)))\rightarrow \\
\Hom_R(F_{\cdot}(N/I^sN), C^{\cdot}_M(I))\rightarrow 0.
\end{gather*}
Let us investigate the cohomologies of the complex in the middle. To this end let us denote $X:=\Hom_R(F_{\cdot}(N/I^sN), \Gamma_I(E^{\cdot}_R(M)))$. Since the functor $\Gamma_I$ sends injective modules to injective modules so it follows that $H^i(X)\cong H^i(\Hom_R(N/I^sN, \Gamma_I(E^{\cdot}_R(M))))$ for all $i\in \mathbb{Z}$ and for all $s\in \mathbb{N}$.

Now $\Supp_R(N/I^sN)\subseteq V(I)$ so by \cite[Lemma 2.2]{p3} there is an isomorphism of complexes
\[
\Hom_R(N/I^sN, \Gamma_I(E^{\cdot}_R(M)))\cong \Hom_R(N/I^sN, E^{\cdot}_R(M))
\]
It follows that $H^i(X)\cong H^i(\Hom_R(F_{\cdot}(N/I^sN), E^{\cdot}_R(M)))\cong \Ext^i_R(N/I^sN, M)$ for all $i\in \mathbb{Z}$ and for all $s\in \mathbb{N}$. Then the long exact sequence of cohomologies induces the exact sequence
\begin{equation}\label{1a}
\Ext^{i-c}_R(N/I^sN, H^c_I(M))\to \Ext^{i}_R(N/I^sN, M)\to \Ext^{i}_R(N/I^sN, C^{\cdot}_M(I))
\end{equation}
for all $i\in \mathbb{Z}$ and for all $s\in \mathbb{N}$. Since the direct limit is an exact functor. So by passing to the direct limit of the last sequence provides the natural homomorphisms in $(i)$. Moreover it is easy to see that these homomorphisms becomes isomorphisms if and only if $H^i_I(N,C^{\cdot}_M(I))= 0$ for all $i\in \mathbb{Z}$.

To construct the natural homomorphisms of the statement in $(ii)$ we firstly apply the Matlis dual functor to the truncation complex. Then we get the exact sequence
\[
0\to D(C^{\cdot}_M(I))\to D(\Gamma_I(E^{\cdot}_R(M)))\to D(H^c_I(M))[c]\to 0.
\]
Then the last sequence induces the following exact sequence
\begin{gather*}
0\to \Hom_R(F_{\cdot}(N/I^sN),D(C^{\cdot}_M(I)))\to \Hom_R(F_{\cdot}(N/I^sN),D(\Gamma_I(E^{\cdot}_R(M))))\to \\ \Hom_R(F_{\cdot}(N/I^sN),D(H^c_I(M)))[c] \to 0.
\end{gather*}
We are interested in the cohomologies of $X=\Hom_R(F_{\cdot}(N/I^sN),D(\Gamma_I(E^{\cdot}_R(M))))$. First of all note that there is an isomorphism of complexes
\[
X\cong D(F_{\cdot}(N/I^sN)\otimes_{R}\Gamma_I(E^{\cdot}_R(M))).
\]
(see \cite[Proposition 5.15]{har1}). Since the Matlis dual functor $D(\cdot)$ is exact and cohomologies commutes with exact functor. So the last isomorphism induces that
\[
H^i(X)\cong D(H^{-i}(F_{\cdot}(N/I^sN)\otimes_{R}\Gamma_I(E^{\cdot}_R(M))))
\]
for all $i\in \mathbb{Z}$. In order to compute the cohomologies of $X$ we have to calculate the cohomologies of $Y:=F_{\cdot}(N/I^sN)\otimes_{R}\Gamma_I(E^{\cdot}_R(M))$. Note that $E^{\cdot}_R(M)$ is a complex of injective $R$-modules. Then by \cite[Theorem 3.2]{pet2} we have $Y\qism F_{\cdot}(N/I^sN)\otimes_{R} \Check{C}_{\underline{y}}\otimes_{R} E^{\cdot}_R(M)$.
Here $\Check{C}_{\underline{y}}$ denotes the \v{C}ech complex with respect $\underline{y} = y_1,\ldots, y_r\in I$ such that $\Rad(IR)=\Rad(\underline{y}R).$

Since tensoring with the right bounded complexes of flat $R$-modules preserves quasi-isomorphisms. Moreover support of $N/I^sN$ is contained in $V(I)$. So there are the following quasi-isomorphisms
\[
F_{\cdot}(N/I^sN)\otimes_{R}\Check{C}_{\underline{y}}\otimes_{R} M \qism F_{\cdot}(N/I^sN)\otimes_{R} \Check{C}_{\underline{y}}\otimes_{R} E^{\cdot}_R(M), \text { and }
\]
\[
F_{\cdot}(N/I^sN)\otimes_R \Check{C}_{\underline{y}} \qism N/I^sN\otimes_R \Check{C}_{\underline{y}}\cong N/I^sN.
\]
Let $L_{\cdot}^R$ denote a free resolution of $M$. Then it follows that the morphism of complexes $F_{\cdot}(N/I^sN)\otimes_{R} \Check{C}_{\underline{y}}\otimes_{R}L_{\cdot}^R\to F_{\cdot}(N/I^sN)\otimes_{R} \Check{C}_{\underline{y}}\otimes_{R}M$ is a homological isomorphism. By the above remark we get the following quasi-isomorphism
\[
F_{\cdot}(N/I^sN)\otimes_{R} \Check{C}_{\underline{y}}\otimes_{R}L_{\cdot}^R\qism N/I^sN\otimes_{R}\Check{C}_{\underline{y}}\otimes_R L_{\cdot}^R\cong N/I^sN\otimes_{R} L_{\cdot}^R
\]
Consequently we get that $H^i(Y)\cong H^i(N/I^sN\otimes_{R}L_{\cdot}^R)\cong \Tor_{-i}^R(N/I^sN,M)$ for all $i\in \mathbb{Z}$ and for all $s\in \mathbb{N}$. By Hom-Tensor Duality (see Lemma \ref{445}) it implies that $H^i(X)\cong \Ext^i_R(N/I^sN,D(M))$ for all $i\in \mathbb{Z}$ and for all $s\in \mathbb{N}$. Then by cohomology sequence there are exact sequences
\begin{equation}\label{1b}
\Ext^i_R(N/I^sN,D(C^{\cdot}_M(I)))\to \Ext^i_R(N/I^sN,D(M))\to \Ext^{i+c}_R(N/I^sN,D(H^c_I(M)))
\end{equation}
for all $i\in \mathbb{Z}$ and for all $s\in \mathbb{N}$. Take the direct limits of it. Then we can easily obtained the natural homomorphisms of $(ii)$ as follows
\[
H^i_{I}(N,D(M))\to H^{i+c}_{I}(N,D(H^c_I(M)))
\]
for all $i\in \mathbb{Z}$. Moreover note that
\[
\lim\limits_{\longrightarrow}\Ext^i_R(N/I^sN,D(C^{\cdot}_M(I)))= H^i_{I}(N,D(C^{\cdot}_M(I)))
\]
for all $i\in \mathbb{Z}$. Hence the isomorphism in $(ii)$ can be easily obtained.
\end{proof}

As a consequence of Proposition \ref{2.3} and Theorem \ref{2} we get the following result.
\begin{corollary}\label{2.11}
Let $M$ be a module over a local ring $R$ and $I$ an ideal with $\grade(I,M)=c.$ If $N$ is a finitely generated $R$-module then the following are true:
\begin{itemize}
\item[(i)] There are the natural homomorphisms
\[
U_{i+c}^I(N, D(M))\to U_{i}^I(N, D(H^c_I(M)))
\]
for all $i\in \mathbb{Z}$. Moreover these are isomorphisms if and only if
$U_i^{I}(N,D(C^{\cdot}_M(I)))=0$ for all $i\in \mathbb{Z}.$

\item[(ii)] There are the natural homomorphisms
\[
L_{i+c}\Lambda^I(N, D(M))\to L_{i}\Lambda^I(N, D(H^c_I(M)))
\]
for all $i\in \mathbb{Z}$. Moreover these are isomorphisms if and only if
$L_i\Lambda^{I}(N,D(C^{\cdot}_M(I)))=0$ for all $i\in \mathbb{Z}.$

\item[(iii)] Suppose in addition that $M$ is Artinian. Then there are the natural homomorphisms
\[
U_{i+c}^I(N, D(D(H^c_I(M))))\to U_{i}^I(N, M)
\]
for all $i\in \mathbb{Z}$. Moreover these are isomorphisms if and only if
$U_i^{I}(N,D(D(C^{\cdot}_M(I))))=0$ for all $i\in \mathbb{Z}.$

\item[(iv)] There are the natural homomorphisms
\[
L_{i+c}\Lambda^I(N, D(D(H^c_I(M))))\to L_{i}\Lambda^I(N, M)
\]
for all $i\in \mathbb{Z}$. Moreover these are isomorphisms if and only if
$L_i\Lambda^{I}(N,D(D(C^{\cdot}_M(I))))=0$ for all $i\in \mathbb{Z}.$
\end{itemize}
\end{corollary}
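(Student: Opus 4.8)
The plan is to obtain all four parts purely formally from Theorem \ref{2} together with the natural isomorphisms of Proposition \ref{2.3}, using that the Matlis dual functor $D(\cdot)$ is exact and faithful, so that it preserves and reflects both isomorphisms and vanishing. The key observation is that Proposition \ref{2.3} supplies, for any $R$-module $P$ and finitely generated $N$, a chain of natural isomorphisms
\[
L_i\Lambda^I(N,D(P))\;\cong\;U^I_i(N,D(P))\;\cong\;D(H^i_I(N,P)),
\]
together with, when $P$ is Artinian, the further natural isomorphisms $L_i\Lambda^I(N,P)\cong U^I_i(N,P)\cong D(H^i_I(N,D(P)))$. Everything then reduces to feeding the homomorphisms of Theorem \ref{2} through these identifications and tracking the vanishing criteria.

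For (i) I would start from the natural homomorphism $H^i_I(N,H^c_I(M))\to H^{i+c}_I(N,M)$ of Theorem \ref{2}(i) and apply $D(\cdot)$, which reverses it to $D(H^{i+c}_I(N,M))\to D(H^i_I(N,H^c_I(M)))$. Substituting $P=M$ and $P=H^c_I(M)$ into the last isomorphism above rewrites this as the asserted map $U^I_{i+c}(N,D(M))\to U^I_i(N,D(H^c_I(M)))$. Since $D$ is faithful and exact, this map is an isomorphism exactly when the Theorem \ref{2}(i) map is, i.e. when $H^i_I(N,C^{\cdot}_M(I))=0$ for all $i$; and $D(H^i_I(N,C^{\cdot}_M(I)))\cong U^I_i(N,D(C^{\cdot}_M(I)))$ turns this into $U^I_i(N,D(C^{\cdot}_M(I)))=0$. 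Part (ii) is then immediate: replacing every occurrence of $U$ by $L$ via the natural isomorphism $L_i\Lambda^I(N,D(\cdot))\cong U^I_i(N,D(\cdot))$ of Proposition \ref{2.3}(ii) transports both the homomorphism and its vanishing criterion verbatim.

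Parts (iii) and (iv) follow the same pattern but start from Theorem \ref{2}(ii), whose map $H^i_I(N,D(M))\to H^{i+c}_I(N,D(H^c_I(M)))$ I would dualize to $D(H^{i+c}_I(N,D(H^c_I(M))))\to D(H^i_I(N,D(M)))$. Here the target is handled by Proposition \ref{2.3}(i), which uses that $M$ is Artinian to give $D(H^i_I(N,D(M)))\cong U^I_i(N,M)$, while the source is handled by Proposition \ref{2.3}(ii) with $P=D(H^c_I(M))$, giving $D(H^{i+c}_I(N,D(H^c_I(M))))\cong U^I_{i+c}(N,D(D(H^c_I(M))))$; together these produce the map in (iii), and the criterion $H^i_I(N,D(C^{\cdot}_M(I)))=0$ becomes $U^I_i(N,D(D(C^{\cdot}_M(I))))=0$. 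Part (iv) again follows by replacing $U$ with $L$: the source $D(D(H^c_I(M)))$ and the error term $D(D(C^{\cdot}_M(I)))$ are of the form $D(\cdot)$, so Proposition \ref{2.3}(ii) applies there, while the target $M$ is Artinian, so Proposition \ref{2.3}(i) applies.

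The main point needing care, and the only place where this is more than bookkeeping, is that $C^{\cdot}_M(I)$ is a complex rather than a module, so the expressions $H^i_I(N,C^{\cdot}_M(I))$, $U^I_i(N,D(C^{\cdot}_M(I)))$ and their doubles must be read as the appropriate hyper-(co)homology, and the isomorphisms of Proposition \ref{2.3} must be seen to extend to this level. I expect this to go through because those isomorphisms arise from natural transformations of functors (Hom--Tensor duality together with the interchange of $D$ with the relevant direct and inverse limits), hence pass to the total complexes of the defining double complexes. One should additionally confirm that the maps obtained after these substitutions coincide with the genuinely \emph{natural} homomorphisms claimed in the statement, which is exactly where the naturality of the Proposition \ref{2.3} isomorphisms is essential.
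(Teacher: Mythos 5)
Your proposal is correct and follows exactly the route the paper intends: its entire proof of this corollary is the one line ``It is obvious in view of Proposition \ref{2.3} and Theorem \ref{2},'' and your argument is precisely the unpacking of that — dualizing the maps of Theorem \ref{2}, identifying $D(H^i_I(N,P))$ with $U^I_i(N,D(P))$ (resp.\ $D(H^i_I(N,D(P)))$ with $U^I_i(N,P)$ for Artinian $P$) via Proposition \ref{2.3}, and using exactness and faithfulness of $D$ to transport the vanishing criteria. Your closing caveat about $C^{\cdot}_M(I)$ being a complex, so that the Proposition \ref{2.3} identifications must be checked at the level of hyper-(co)homology, is a legitimate point of care that the paper silently elides.
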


\begin{proof}
It is obvious in view of Proposition \ref{2.3} and Theorem \ref{2}.
\end{proof}

\begin{proposition}\label{1}
Let $M$ be an module over an arbitrary ring $R$. Suppose that $\grade(I,M)=c$ for an ideal $I$. Then for any $R$-module $N$ we have:
\begin{itemize}
\item[(i)] There are the natural homomorphisms
\[
U_{i+c}^I(N,H^c_I(M))\to U_i^I(N,M)
\]
for all $i\in \mathbb{Z}$.

\item[(ii)] Suppose that $R$ is a local ring and $M$ is an Artinian $R$-module. Further assume that $N$ is finitely generated. Then there are the natural homomorphisms
\[
L_{i+c}\Lambda^I(N, H^c_I(M))\to L_{i}\Lambda^I(N, M)
\]
for all $i\in \mathbb{Z}$.
\end{itemize}
\end{proposition}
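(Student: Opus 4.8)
The plan is to construct the maps of (i) directly from the truncation complex, and then to reduce (ii) to (i) through the comparison isomorphism of Proposition \ref{2.3}.

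For (i), I would fix for each $s\in\mathbb{N}$ a free resolution $F_{\cdot}(N/I^sN)$ of $N/I^sN$ and tensor the defining exact sequence of the truncation complex
\[
0\to H^c_I(M)[-c]\to \Gamma_I(E^{\cdot}_R(M))\to C^{\cdot}_M(I)\to 0
\]
by $F_{\cdot}(N/I^sN)$. Because each term is tensored by a complex of flat modules, the resulting sequence of total complexes stays exact. Next I would compute the cohomology of the first two complexes. The left-hand one has $H^c_I(M)[-c]$ concentrated in cohomological degree $c$, so tensoring with the flat resolution gives $H^i\cong\Tor^R_{c-i}(N/I^sN,H^c_I(M))$. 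The middle one is exactly the complex $F_{\cdot}(N/I^sN)\otimes_R\Gamma_I(E^{\cdot}_R(M))$ whose cohomology was already determined inside the proof of Theorem \ref{2}, namely $H^i\cong\Tor^R_{-i}(N/I^sN,M)$; here I would only note that this determination rests solely on $\Supp_R(N/I^sN)\subseteq V(I)$ and on the standard quasi-isomorphism relating $\Gamma_I(E^{\cdot}_R(M))$ to the \v{C}ech complex tensored with $E^{\cdot}_R(M)$, so that it remains valid over an arbitrary Noetherian ring and does not require the local hypothesis used elsewhere in Theorem \ref{2}. The inclusion of complexes then induces, in cohomological degree $-i$, natural homomorphisms
\[
\Tor^R_{i+c}(N/I^sN,H^c_I(M))\to\Tor^R_{i}(N/I^sN,M).
\]

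These are induced by the fixed inclusion $H^c_I(M)[-c]\hookrightarrow\Gamma_I(E^{\cdot}_R(M))$ together with the functoriality of $\Tor$ in the first variable, hence they commute with the transition maps of the inverse systems indexed by $s$. Passing to the inverse limit therefore yields the asserted homomorphisms $U^I_{i+c}(N,H^c_I(M))\to U^I_i(N,M)$, which proves (i). For (ii) I would apply Proposition \ref{2.3}(i) twice: with $R$ local, $M$ Artinian and $N$ finitely generated it gives $L_i\Lambda^I(N,M)\cong U^I_i(N,M)$, and the same statement applied to $H^c_I(M)$ gives $L_{i+c}\Lambda^I(N,H^c_I(M))\cong U^I_{i+c}(N,H^c_I(M))$. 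Composing these two isomorphisms with the map of (i) then delivers the homomorphisms $L_{i+c}\Lambda^I(N,H^c_I(M))\to L_i\Lambda^I(N,M)$.

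The point that requires genuine justification, and the one I expect to be the main obstacle, is that $H^c_I(M)$ is again Artinian, since this is what permits the second application of Proposition \ref{2.3} in (ii). I would argue it as follows: an Artinian module over a Noetherian local ring is $\mathfrak m$-torsion, because each cyclic submodule $Rx$ is a module over the Artinian quotient ring $R/\Ann_R(x)$, whose maximal ideal is nilpotent; hence $\Supp_R M\subseteq\{\mathfrak m\}$ and $\mu_i(\mathfrak p,M)=0$ for $\mathfrak p\neq\mathfrak m$. Moreover $\mu_i(\mathfrak m,M)=\dim_k\Ext^i_R(k,M)$ is finite, since $\Ext^i_R(k,M)$ is a subquotient of a finite power of the Artinian module $M$ while being a $k$-vector space, hence finite dimensional. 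Consequently every term of $E^{\cdot}_R(M)$ is a finite direct sum of copies of $E=E_R(k)$, so each term of $\Gamma_I(E^{\cdot}_R(M))$ is a finite direct sum of copies of the Artinian module $\Gamma_I(E)\subseteq E$; therefore $H^c_I(M)$ is a subquotient of an Artinian module and is itself Artinian. Beyond this, the only remaining points are the degree bookkeeping in the long exact sequence of (i) and the compatibility of the constructed maps with the inverse limit, both of which are routine.
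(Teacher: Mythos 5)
Your proof of part (i) is correct and coincides with the paper's: tensor the truncation-complex sequence with a free resolution of $N/I^sN$, identify the cohomology of the middle term as $\Tor^R_{-i}(N/I^sN,M)$ via the computation inside the proof of Theorem \ref{2}$(ii)$, read off the connecting maps $\Tor^R_{i+c}(N/I^sN,H^c_I(M))\to\Tor^R_i(N/I^sN,M)$, and pass to the inverse limit. Your added remark that this computation uses only $\Supp_R(N/I^sN)\subseteq V(I)$ and hence survives outside the local setting is a point the paper leaves implicit, and it is worth making since Theorem \ref{2}$(ii)$ is stated under a local hypothesis.

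For part (ii) you take a slightly different route. The paper composes the always-available natural homomorphism $L_{i+c}\Lambda^I(N,H^c_I(M))\to U^I_{i+c}(N,H^c_I(M))$ coming from the exact sequence of Lemma \ref{1.3} (which exists for arbitrary $M$ and $N$, Artinian or not) with the map of (i) and with the isomorphism $U^I_i(N,M)\cong L_i\Lambda^I(N,M)$ of Proposition \ref{2.3}, which uses only the Artinianness of $M$ itself. So the step you single out as the main obstacle --- that $H^c_I(M)$ is again Artinian --- is not actually needed to produce the homomorphisms. That said, your argument for it (finite Bass numbers of an Artinian module over a Noetherian local ring, so that $\Gamma_I(E^{\cdot}_R(M))$ is a complex of finite direct sums of copies of $E_R(k)$ and $H^c_I(M)$ is a subquotient of an Artinian module) is correct, and it buys you something the paper's proof does not claim: the first map in your composition is an isomorphism rather than merely a surjection, so the resulting homomorphism in (ii) agrees with the paper's but is exhibited as factoring through an isomorphism at that stage. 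Both arguments are valid; yours does marginally more work for a marginally sharper intermediate statement.
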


\begin{proof}
First of all we prove the statement in $(i)$. For this let $F_{\cdot}^R(N/I^sN)$ be a free resolution of $N/I^sN$. Then tensoring with $F_{\cdot}(N/I^sN)$ to the truncation complex induces the following short exact sequence of complexes
\[
0\to (F_{\cdot}(N/I^sN)\otimes_{R} H^c_I(M))[-c]\to
F_{\cdot}(N/I^sN)\otimes_{R}\Gamma_I(E^{\cdot}_R(M))\to F_{\cdot}(N/I^sN)\otimes_{R} C^{\cdot}_M(I)\to 0
\]
By the proof of Theorem \ref{2}$(ii)$ the homologies of $Y= F_{\cdot}(N/I^sN)\otimes_{R}\Gamma_I(E^{\cdot}_R(M))$ are:
\[
H^i(Y)\cong \Tor_{-i}^R(N/I^sN,M)
\]
for all $i\in \mathbb{Z}$. Then the homology sequence induces the following exact sequence
\begin{equation}\label{1c}
\Tor_{c-i}^R(N/I^sN,H^c_I(M))\to \Tor_{-i}^R(N/I^sN,M)\to \Tor_{-i}^R(N/I^sN,C^{\cdot}_M(I))
\end{equation}
for all $i\in \mathbb{Z}$. As $-i$ varies over $\mathbb{Z}$, we can replace it with $i$. By passing to the inverse limits provides the natural homomorphisms of the statement in $(i)$.

Now suppose that $R$ is local and $M$ is Artinian. Moreover let $N$ be finitely generated. Note that Lemma \ref{1.3} induces the natural homomorphism
\[
L_{i+c}\Lambda^I(N, H^c_I(M))\to U_{i+c}^I(N,H^c_I(M))
\]
for all $i\in \mathbb{Z}$. Moreover $U_{i}^I(N,M)\cong L_{i}\Lambda^I(N, M)$, see Proposition \ref{2.3}$(ii)$, for all $i\in \mathbb{Z}$. Then the claim in $(ii)$ is easy by virtue of $(i)$.

\end{proof}

In the next Corollary we will relate the surjectivity and injectivity of the natural homomorphisms of Theorem \ref{2} and Corollary \ref{2.11}.
\begin{corollary}\label{02}
Let $N$ be a finitely generated module over a local ring $R$. Let $M$ be an $R$-module and $I$ an ideal such that $c=\grade(I,M)$. Then for each fixed $i\in \mathbb{Z}$ we have:
\begin{itemize}
\item[(1)] The following conditions are equivalent:
\begin{itemize}
\item[(i)] The natural homomorphism
\[
H^{i}_{I}(N,H^c_{I}(M))\to H^{i+c}_{I}(N,M)
\]
is injective ( resp. surjective).

\item[(ii)] The natural homomorphism
\[
U_{i+c}^I(N, D(M))\to U_{i}^I(N, D(H^c_I(M)))
\]
is surjective ( resp. injective).

\item[(iii)] The natural homomorphism
\[
L_{i+c}\Lambda^I(N, D(M))\to L_{i}\Lambda^I(N, D(H^c_I(M)))
\]
is surjective ( resp. injective).
\end{itemize}
\item[(2)] If in addition $M$ is an Artinian $R$-module. Then the following conditions are equivalent:
\begin{itemize}
\item[(i)] The natural homomorphism
\[
H^i_{I}(N,D(M))\to H^{i+c}_{I}(N,D(H^c_I(M)))
\]
is injective ( resp. surjective).
\item[(ii)] The natural homomorphism
\[
U_{i+c}^I(N, D(D(H^c_I(M))))\to U_{i}^I(N, M)
\]
is surjective ( resp. injective).
\item[(iii)] The natural homomorphism
\[
L_{i+c}\Lambda^I(N, D(D(H^c_I(M))))\to L_{i}\Lambda^I(N, M)
\]
is surjective ( resp. injective).
\end{itemize}
\end{itemize}
\end{corollary}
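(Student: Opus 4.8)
The plan is to show that, for each fixed $i$, the three homomorphisms in a given part of the statement are carried to one another by the Matlis dual functor $D(\cdot)$ together with the natural identifications of Proposition \ref{2.3}, and then to invoke the exactness and faithfulness of $D$. The key input is the duality of Proposition \ref{2.3}$(ii)$: for a local ring $R$, a finitely generated module $N$, and \emph{any} $R$-module $L$, one has $U_j^I(N,D(L))\cong D(H^j_I(N,L))$ for all $j\in\mathbb{Z}$, and these isomorphisms are natural in $L$. I would begin by recording this naturality, since it is what permits a comparison of morphisms and not merely of objects.

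For part $(1)$, I would apply the contravariant exact functor $D$ to the natural homomorphism $f\colon H^i_I(N,H^c_I(M))\to H^{i+c}_I(N,M)$ of Theorem \ref{2}$(i)$. Feeding $L=M$ and $L=H^c_I(M)$ into the duality above identifies $D(f)$ with the homomorphism $U_{i+c}^I(N,D(M))\to U_i^I(N,D(H^c_I(M)))$ of Corollary \ref{2.11}$(i)$, i.e.\ with the map in $(1)(ii)$. Because $E=E_R(k)$ is an injective cogenerator, $D$ is exact and reflects exactness, so $f$ is injective precisely when $D(f)$ is surjective and $f$ is surjective precisely when $D(f)$ is injective; this is exactly the swap recorded in the statement, giving $(1)(i)\Leftrightarrow(1)(ii)$. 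The equivalence $(1)(ii)\Leftrightarrow(1)(iii)$ is then immediate from the natural isomorphisms $L_j\Lambda^I(N,D(L))\cong U_j^I(N,D(L))$ of Proposition \ref{2.3}$(ii)$ (applied with $L=M$ and $L=H^c_I(M)$), which identify the map in $(1)(iii)$ with that in $(1)(ii)$.

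Part $(2)$ runs in parallel, starting from $g\colon H^i_I(N,D(M))\to H^{i+c}_I(N,D(H^c_I(M)))$ of Theorem \ref{2}$(ii)$. Applying $D$ and the duality with $L=D(H^c_I(M))$ and $L=D(M)$, and using the reflexivity $D(D(M))\cong M$ valid since $M$ is Artinian, identifies $D(g)$ with the map $U_{i+c}^I(N,D(D(H^c_I(M))))\to U_i^I(N,M)$ of Corollary \ref{2.11}$(iii)$, which yields $(2)(i)\Leftrightarrow(2)(ii)$ by the same injective/surjective dichotomy. Finally $(2)(ii)\Leftrightarrow(2)(iii)$ follows from Proposition \ref{2.3}$(i)$ applied to the Artinian module $M$ and Proposition \ref{2.3}$(ii)$ applied to $D(H^c_I(M))$, both producing $L_j\Lambda^I\cong U_j^I$ compatibly with the natural maps.

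The main obstacle is justifying the word ``identifies'': one must verify that the homomorphism in $(ii)$ really is $D$ applied to the homomorphism in $(i)$, under the natural isomorphisms of Proposition \ref{2.3}$(ii)$. Concretely, the map in $(i)$ is the direct limit over $s$ of the comparison maps attached to the exact sequences \eqref{1a} (for part $(1)$) or \eqref{1b} (for part $(2)$); applying the exact functor $D$, commuting it past the direct limit to obtain an inverse limit, and using the Hom-Tensor duality of Lemma \ref{445} to rewrite $D(\Ext^j_R(N/I^sN,-))$ as $\Tor^R_j(N/I^sN,D(-))$, one should recover exactly the inverse limit of $\Tor$-maps defining the homomorphism in $(ii)$. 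Checking that these identifications are natural in the module variable, and hence commute with the comparison maps coming from the truncation complex, is the only non-formal point; once it is in place, the injectivity/surjectivity statements are purely formal consequences of $D$ being exact and faithful.
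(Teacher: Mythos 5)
Your proposal is correct and is essentially the paper's own argument: the paper's proof is the one-line remark ``By Matlis duality it is obvious from Proposition \ref{2.3}, Theorem \ref{2} and Corollary \ref{2.11},'' and what you write out --- identifying the maps in $(ii)$ and $(iii)$ with the Matlis dual of the map in $(i)$ via the natural isomorphisms $D(H^j_I(N,L))\cong U^I_j(N,D(L))\cong L_j\Lambda^I(N,D(L))$, and then using that $D$ is exact and faithful so that it swaps injectivity and surjectivity --- is precisely the content being left implicit there.
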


\begin{proof}
By Matlis duality it is obvious from Proposition \ref{2.3}, Theorem \ref{2} and Corollary \ref{2.11}.
\end{proof}

\section{Vanishing and non-vanishing properties}

In this section we will describe the vanishing and non-vanishing results of $L_i\Lambda^I (N,M)$. Moreover we will prove that our natural homomorphisms described in the previous section are isomorphisms under the additional assumption of $M$ is cohomologically complete intersection with respect to $I$. Moreover we are succeeded to prove some equivalent conditions such that $M$ is cohomologically complete intersection with respect to the pair $(N,I)$. At the end of this section there is a characterization of grade and co-grade. As a first step we have the following vanishing result:

\begin{corollary}\label{001}
Let $I$ be an ideal over a local ring $R$. Let $M$ be an $R$-module. Then for any finitely generated $R$-module $N$ we have:
\begin{itemize}
\item[(i)] $H^i_{I}(N,M)=0$ if and only if $U^I_i(N,D(M))=0$ if and only if $L_i\Lambda^I (N,D(M))=0$ for each fixed $i\in \mathbb{Z}$.

\item[(ii)] If $M$ is Artinian. Then $H^i_{I}(N,D(M))=0$ if and only if $U^I_i(N,M)=0$ if and only if $L_i\Lambda^I (N,M)=0$ for each fixed $i\in \mathbb{Z}$.

\end{itemize}
\end{corollary}

\begin{proof}
It is an immediate consequence of Proposition \ref{2.3}.
\end{proof}

\begin{corollary}\label{0.01}
Let $N$ be a finitely generated module over a local ring $R$. Let $M$ be an $R$-module and $I$ an ideal such that $c=\grade(I,M)$. Then the following conditions are equivalent:
\begin{itemize}
\item[(i)] $M$ is cohomologically complete intersection with respect to $(N,I)$.

\item[(ii)] $U^I_i(N,D(M))=0$ for all $i\neq c.$

\item[(iii)] $L_i\Lambda^I (N,D(M))=0$ for all $i\neq c.$

\end{itemize}
\end{corollary}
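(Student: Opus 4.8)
The plan is to deduce the three-way equivalence directly from the pointwise vanishing dictionary recorded in Corollary \ref{001}(i), by quantifying that fixed-index statement over the set $\{i : i \neq c\}$. First I would unwind the definition of the central notion: by the definition of cohomologically complete intersection with respect to the pair $(N,I)$, condition (i) asserts precisely that $H^i_I(N,M) = 0$ for all $i \neq c$, where $c = \grade(I,M)$. Thus the claim reduces to the equivalence of the three families of vanishing conditions, one for each of $H^i_I(N,M)$, $U^I_i(N,D(M))$ and $L_i\Lambda^I(N,D(M))$, ranging over $i \neq c$.

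The essential input is Proposition \ref{2.3}(ii): since $R$ is local and $N$ is finitely generated, for every fixed $i \in \mathbb{Z}$ we have the natural isomorphisms $L_i\Lambda^I(N,D(M)) \cong U^I_i(N,D(M))$ and $D(H^i_I(N,M)) \cong U^I_i(N,D(M))$. Because $E = E_R(k)$ is an injective cogenerator over the local ring $R$, the Matlis dual functor $D(\cdot)$ is faithful in the sense that $D(X) = 0$ if and only if $X = 0$; hence $H^i_I(N,M) = 0$ if and only if its dual $D(H^i_I(N,M))$, equivalently $U^I_i(N,D(M))$, vanishes. Combining this with the first isomorphism gives, for each fixed $i$, the chain
\[
H^i_I(N,M) = 0 \iff U^I_i(N,D(M)) = 0 \iff L_i\Lambda^I(N,D(M)) = 0,
\]
which is exactly the content of Corollary \ref{001}(i).

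Finally I would apply this fixed-index equivalence uniformly over every $i \neq c$. This shows that $H^i_I(N,M) = 0$ for all $i \neq c$ holds if and only if $U^I_i(N,D(M)) = 0$ for all $i \neq c$ holds if and only if $L_i\Lambda^I(N,D(M)) = 0$ for all $i \neq c$ holds; translating the first family back through the definition yields the equivalence of (i), (ii) and (iii). I do not anticipate any genuine obstacle here, as the substance lies entirely in Proposition \ref{2.3} and Corollary \ref{001}; the only point requiring a little care is the routine but necessary step of ranging the pointwise statement over the whole index set $\{i \neq c\}$ rather than a single index, together with invoking the definition of cohomologically complete intersection to identify condition (i) with the vanishing of $H^i_I(N,M)$ off of $c$.
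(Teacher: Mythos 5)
Your proposal is correct and follows exactly the paper's route: the paper's own proof simply invokes Corollary \ref{001}(i) (which rests on Proposition \ref{2.3}(ii) and the faithfulness of Matlis duality) and quantifies the pointwise vanishing equivalence over all $i \neq c$. You merely spell out the details the paper leaves implicit.
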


\begin{proof}
For the proof see Corollary \ref{001}$(i)$.
\end{proof}

For the rest of the paper we need the theory of spectral sequences. For this purpose we refer to \cite{b}, \cite{r} and \cite{w}.
\begin{proposition}\label{2.1}
Let $R$ be any ring, $I$ an ideal of $R$ and $N$ a finitely generated $R$-module. Suppose that $M$ is an $R$-module with $\grade(I,M)=c$. Then the following are true:
\begin{itemize}
\item[(i)] The natural homomorphism
\[
H^0_I(N,H^c_I(M))\to H^{c}_I(N, M)
\]
is an isomorphism and $H^i_{I}(N,M)=H^{i-c}_{I}(N,H^c_I(M))= 0$ for all $i< c$.

\item[(ii)] Suppose now that $R$ is local. Then the natural homomorphism
\[
U^I_c(N,D(M))\to U^I_{0}(N,D(H^c_I(M))
\]
is an isomorphism and $U^I_i(N,D(M))= U^I_{i-c}(N,D(H^c_I(M)))=0$ for all $i< c$.

\item[(iii)] The natural homomorphism
\[
L_c\Lambda^I (N,D(M))\to L_{0}\Lambda^I (N,D(H^c_I(M)))
\]
is an isomorphism and $L_i\Lambda^I (N,D(M))=L_{i-c}\Lambda^I (N,D(H^c_I(M)))=0$ for all $i< c$.
\end{itemize}
\end{proposition}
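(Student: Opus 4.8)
The statement to prove is Proposition 2.1, which asserts three parallel facts: that the natural homomorphisms of Theorem \ref{2} and Corollary \ref{2.11} are isomorphisms in the extreme degree $i=c$ (for $H$) or $i=0$ (for $U$ and $L\Lambda$), and that all the relevant modules vanish below the grade. My plan is to prove part $(i)$ directly using the truncation complex, and then deduce $(ii)$ and $(iii)$ formally from $(i)$ via Matlis duality and Proposition \ref{2.3}, exactly as Corollary \ref{2.11} was obtained.

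First I would recall the exact sequence \eqref{1a} from the proof of Theorem \ref{2}$(i)$, namely
\[
\Ext^{i-c}_R(N/I^sN, H^c_I(M))\to \Ext^{i}_R(N/I^sN, M)\to \Ext^{i}_R(N/I^sN, C^{\cdot}_M(I)),
\]
and its continuation as a long exact sequence. The key input is the property of the truncation complex recorded just before Theorem \ref{2}: one has $H^i(C^{\cdot}_M(I))=0$ for all $i\leq c$. From this I would argue that the hyper-Ext groups $H^i_I(N,C^{\cdot}_M(I))$, computed as direct limits of $\Ext^i_R(N/I^sN,C^{\cdot}_M(I))$, vanish in the range $i\leq c$; the cleanest way is through the hypercohomology spectral sequence with $E_2^{p,q}=\Ext^p_R(N/I^sN,H^q(C^{\cdot}_M(I)))$, whose terms with $q\le c$ are zero, forcing $H^i(\Hom_R(F_{\cdot}(N/I^sN),C^{\cdot}_M(I)))=0$ for $i\le c$ (since any nonzero contribution would need $q\le c$). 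Passing to the direct limit then yields $H^i_I(N,C^{\cdot}_M(I))=0$ for $i\le c$. Feeding this vanishing into the long exact sequence obtained from \eqref{1a}, the maps $H^{i-c}_I(N,H^c_I(M))\to H^i_I(N,M)$ become isomorphisms for $i<c$ and the target is squeezed between two zero terms, giving $H^i_I(N,M)=0$ for $i<c$; in degree $i=c$ the term $H^c_I(N,C^{\cdot}_M(I))=0$ on the right together with $H^{c-1}_I(N,C^{\cdot}_M(I))=0$ on the left makes $H^0_I(N,H^c_I(M))\to H^c_I(N,M)$ an isomorphism. That proves $(i)$.

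For $(ii)$ and $(iii)$ I would invoke Proposition \ref{2.3}$(ii)$, which gives $U^I_i(N,D(M))\cong L_i\Lambda^I(N,D(M))\cong D(H^i_I(N,M))$ for every $i$ when $N$ is finitely generated and $R$ is local. Applying $D(\cdot)$ to the isomorphism and vanishing statements of $(i)$ and matching degrees (recall the natural map of Theorem \ref{2}$(ii)$/Corollary \ref{2.11} is, up to Matlis duality, the dual of the map in $(i)$) converts the isomorphism $H^0_I(N,H^c_I(M))\xrightarrow{\sim}H^c_I(N,M)$ into $U^I_c(N,D(M))\xrightarrow{\sim}U^I_0(N,D(H^c_I(M)))$ and likewise for $L\Lambda$, and converts the vanishing $H^i_I(N,M)=0$ for $i<c$ into $U^I_i(N,D(M))=0$ and $L_i\Lambda^I(N,D(M))=0$ for $i<c$. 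Since $D$ is faithful on the relevant modules and $H^0_I(N,H^c_I(M))$ dualizes to $U^I_0$ and $L_0\Lambda^I$ in the argument $D(H^c_I(M))$, all the claimed identifications follow.

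**The main obstacle.** The routine step is plugging vanishing into a long exact sequence; the delicate point is the bookkeeping of the hypercohomology spectral sequence at the boundary degree $i=c$. I must be careful that the only possibly nonzero $E_2$-terms contributing to total degree $c$ come from $q>c$, i.e. that no term with $q\le c$ survives; this is where the sharp vanishing $H^i(C^{\cdot}_M(I))=0$ \emph{for all} $i\le c$ (not merely $i<c$) is essential, and it is what makes the edge map in degree $c$ an isomorphism rather than merely injective or surjective. Keeping the index shifts straight between the $\Ext$-degrees, the $[-c]$ shift of the truncation sequence, and the homological (Tor/$U$/$L\Lambda$) indexing under Matlis duality is the only genuine hazard.
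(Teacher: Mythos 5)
Your proposal is correct and follows essentially the same route as the paper: the hypercohomology spectral sequence with $E_2^{p,q}=\Ext^p_R(N/I^sN,H^q(C^{\cdot}_M(I)))$ (the paper phrases it via an injective resolution of the truncation complex) gives $H^i_I(N,C^{\cdot}_M(I))=0$ for $i\le c$, the long exact sequence from \eqref{1a} then yields part $(i)$, and parts $(ii)$ and $(iii)$ follow by Matlis duality via Proposition \ref{2.3} (the paper invokes Corollaries \ref{02} and \ref{001}, which encapsulate exactly that duality). The only cosmetic difference is that you extract the vanishing $H^i_I(N,M)=0$ for $i<c$ from the long exact sequence itself, whereas the paper cites an external result of Bijan-Zadeh; your derivation is equally valid.
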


\begin{proof}
By Corollaries \ref{02}$(1)$ and \ref{001}$(1)$ we will only prove the claim in $(i)$. Let $E_{\cdot}^R$ be an injective resolution of the truncation complex $C^{\cdot}_M(I)$. Then by definition of the truncation complex it follows that $H^i(E_{\cdot}^R)=0$ for all $i\leq c$. Moreover note that
\[
\Ext_R^i(\cdot,C^{\cdot}_M(I))\cong H^i(\Hom_R(\cdot, E_{\cdot}^R))
\]
for all $i\in \mathbb{Z}$ (see \cite[p. 331]{r}). Now let $s\in \mathbb{N}$ be a fixed natural number and consider the following spectral sequence
\[
E^{p,q}_2:=\Ext^p_R(N/I^sN,H^q_I(E_{\cdot}^R))\Longrightarrow E^{p+q}_\infty= H^{p+q}(\Hom_R(N/I^sN,E_{\cdot}^R))
\]
(see \cite[Theorem 11.38]{r}). Let $p + q\leq c$. Note that for $p\geq 0$ we have $q\leq c$. Therefore it turns out that
\[
H^{p+q}(\Hom_R(N/I^sN,E_{\cdot}^R))=0 \text { for all $p + q\leq c$}.
\]
It implies that $H^i(\Hom_R(N/I^sN, E_{\cdot}^R))=0$ for all $i\leq c$ and for all $s\in \mathbb{N}$ as a consequence of the spectral sequence. So we have
\[
H^i_I(N,C^{\cdot}_M(I))=\lim\limits_{\longrightarrow}\Ext_R^i(N/I^sN,C^{\cdot}_M(I))= 0
\]
for all $i\leq c$. Moreover by \cite[Proposition 5.5]{b1} it follows that $H^i_{I}(N,M)=0$ for all $i< c$ and  $H^c_{I}(N,M)\neq 0$. After taking the direct limit the exact sequence \ref{1a} implies that there is an isomorphism
\[
H^0_{I}(N,H^c_I(M))\to H^c_{I}(N,M)
\]
and $H^{i-c}_{I}(N,H^c_I(M))=0$ for all $i< c$. This finish the proof of Proposition.
\end{proof}

\begin{corollary}\label{2.2}
Let $I$ be an ideal of an arbitrary ring $R$ and $M$ a cohomologically complete intersection module with respect to $I$. Then for any $R$-module $N$ the following are true:
\begin{itemize}
\item[(i)] The natural homomorphism
\[
H^{i}_{I}(N,H^c_{I}(M))\to H^{i+c}_{I}(N,M)
\]
is an isomorphism for all $i\in \mathbb{Z}$.

\item[(ii)] The natural homomorphism
\[
U_{i+c}^I(N,H^c_I(M))\to U_{i}^I(N,M)
\]
is injective for all $i\in \mathbb{Z}$.

\item[(iii)] Suppose in addition that $R$ is local. Then the natural homomorphism
\[
H^i_{I}(N,D(M))\to H^{i+c}_{I}(N,D(H^c_I(M)))
\]
is an isomorphism for all $i\in \mathbb{Z}$.

\item[(iv)] Assume in addition that $N$ is finitely generated. Then the natural homomorphism
\[
U_{i+c}^I(N, D(M))\to U_{i}^I(N, D(H^c_I(M)))
\]
is an isomorphism for all $i\in \mathbb{Z}$.

\item[(v)] The natural homomorphism
\[
L_{i+c}\Lambda^I(N, D(M))\to L_{i}\Lambda^I(N, D(H^c_I(M)))
\]
is an isomorphism for all $i\in \mathbb{Z}$.

\item[(vi)] Further assume that $M$ is Artinian. Then the natural homomorphism
\[
U_{i+c}^I(N, D(D(H^c_I(M))))\to U_{i}^I(N, M)
\]
is an isomorphism for all $i\in \mathbb{Z}$.

\item[(vii)] The natural homomorphism
\[
L_{i+c}\Lambda^I(N, D(D(H^c_I(M))))\to L_{i}\Lambda^I(N, M)
\]
is an isomorphism for all $i\in \mathbb{Z}$.
\end{itemize}
\end{corollary}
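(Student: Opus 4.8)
The plan is to reduce every assertion to one structural fact: the hypothesis that $M$ is a cohomologically complete intersection with respect to $I$ is equivalent to the acyclicity of the truncation complex $C^{\cdot}_M(I)$. Indeed, as recorded in the construction of the truncation complex one always has $H^i(C^{\cdot}_M(I))=0$ for $i\le c$ and $H^i(C^{\cdot}_M(I))\cong H^i_I(M)$ for $i>c$; since $M$ being a cohomologically complete intersection means precisely that $H^i_I(M)=0$ for all $i\neq c$, the remaining cohomologies $H^i(C^{\cdot}_M(I))$ with $i>c$ vanish as well. Hence $C^{\cdot}_M(I)$ is acyclic, i.e. quasi-isomorphic to the zero complex. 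First I would record the consequence that all the relevant functors vanish on this complex. Because $D(\cdot)$ is exact, $C^{\cdot}_M(I)$, its dual $D(C^{\cdot}_M(I))$, and (when $M$ is Artinian) the double dual $D(D(C^{\cdot}_M(I)))$ are all acyclic; and since $H^i_I(N,-)$, $U^I_i(N,-)$ and $L_i\Lambda^I(N,-)$ are hyper-derived functors, they are insensitive to quasi-isomorphism and therefore vanish on any acyclic complex. This yields $H^i_I(N,C^{\cdot}_M(I))=0$, $H^i_I(N,D(C^{\cdot}_M(I)))=0$, $U^I_i(N,D(C^{\cdot}_M(I)))=0$ and $L_i\Lambda^I(N,D(C^{\cdot}_M(I)))=0$ for all $i$, together with the doubly dualized versions in the Artinian case.

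With these vanishings in hand the statements follow by feeding them into the criteria already established. For (i) and (iii) I would invoke the two halves of Theorem \ref{2}, whose ``isomorphism if and only if vanishing'' clauses convert the vanishing of $H^i_I(N,C^{\cdot}_M(I))$ and of $H^i_I(N,D(C^{\cdot}_M(I)))$ directly into the desired isomorphisms. For (iv), (v) and, for Artinian $M$, (vi), (vii), I would likewise apply the four parts of Corollary \ref{2.11}, using the vanishing of $U^I_i(N,D(C^{\cdot}_M(I)))$, of $L_i\Lambda^I(N,D(C^{\cdot}_M(I)))$, and of their double duals.

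Part (ii) is the odd one out, asserting only injectivity, and I would treat it via Proposition \ref{1}(i): its natural homomorphism $U^I_{i+c}(N,H^c_I(M))\to U^I_i(N,M)$ arises by applying the inverse limit to the exact sequence (\ref{1c}). Once $C^{\cdot}_M(I)$ is acyclic, the flanking terms $\Tor^R_{\bullet}(N/I^sN,C^{\cdot}_M(I))$ vanish for every $s$, so the $s$-level maps $\Tor^R_{i+c}(N/I^sN,H^c_I(M))\to \Tor^R_i(N/I^sN,M)$ are injective, and applying the left-exact functor $\lim_{\longleftarrow}$ preserves injectivity. (In fact those $s$-level maps are isomorphisms of inverse systems, so the limit map is an isomorphism, but only injectivity is recorded in the statement.)

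The only genuinely delicate point is the claim that these ``relative'' functors vanish on an acyclic complex, which I expect to be the main obstacle; it is bookkeeping rather than a new idea. For the $\Ext$-type functor $H^i_I(N,-)$ one resolves $C^{\cdot}_M(I)$ by injectives and uses that an acyclic bounded-below complex of injectives is contractible, so $\Hom_R(N/I^sN,-)$ applied to it stays acyclic and the direct limit over $s$ vanishes; for the $\Tor$- and $\Lambda$-type functors $U^I_i(N,-)$ and $L_i\Lambda^I(N,-)$ one instead tensors $C^{\cdot}_M(I)$ with a free resolution $F_{\cdot}(N/I^sN)$, noting that tensoring an acyclic complex with flats keeps it acyclic, whence each $\Tor^R_{i}(N/I^sN,C^{\cdot}_M(I))=0$ and both $\lim_{\longleftarrow}$ and its first derived functor vanish. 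All the remaining content is then a direct citation of Theorem \ref{2}, Corollary \ref{2.11} and Proposition \ref{1}.
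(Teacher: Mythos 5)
Your proposal is correct and follows essentially the same route as the paper: both rest on the observation that the cohomologically-complete-intersection hypothesis makes the truncation complex $C^{\cdot}_M(I)$ acyclic, and then feed the resulting vanishings into Theorem \ref{2} and the exact sequence \ref{1c} (with inverse limits preserving injectivity for part (ii)). The only cosmetic difference is in parts (iv)--(vii), where the paper transfers the isomorphisms of (i) and (iii) through the Matlis-duality equivalences of Corollary \ref{02}, whereas you verify the vanishing of $U^I_i(N,D(C^{\cdot}_M(I)))$ and $L_i\Lambda^I(N,D(C^{\cdot}_M(I)))$ directly and invoke Corollary \ref{2.11}; both are valid deductions from results already established in the paper.
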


\begin{proof}
Since $H^i_I(M)= 0$ for all $i\neq c$. Then by definition of the truncation complex $H^i(C^{\cdot}_R(I))=0$ for all $i\in \mathbb{Z}$. So the complex $C^{\cdot}_R(I)$ is exact. Let $F_{\cdot}(N/I^sN)$ be a free resolution of $N/I^sN$. Then the complex $\Hom_R(F_{\cdot}(N/I^sN),C^{\cdot}_R(I))$ is exact too, for each $s\in \mathbb{N}$. It implies that
\[
H^i_I(N,C^{\cdot}_M(I))=\lim\limits_{\longrightarrow}\Ext_R^i(N/I^sN,C^{\cdot}_M(I))= 0
\]
for all $i\in \mathbb{Z}$. So by Theorem \ref{2}$(i)$ this completes the proof of $(i)$. Moreover the statements in $(iv)$ and $(v)$ is a consequence of Corollary \ref{02}$(1)$. By the similar arguments with the help of Theorem \ref{2}$(ii)$ and Corollary \ref{02}$(2)$ we can prove the isomorphisms in $(iii)$, $(vi)$ and $(vii)$.

Note that by the above remarks the complex $F_{\cdot}(N/I^sN)\otimes_R C^{\cdot}_R(I)$ is exact for each $s\in \mathbb{N}$ . It proves the injectivity of $(ii)$ by virtue of the exact sequence \ref{1c}.
\end{proof}

Now we are able to prove our main results.

\begin{theorem}\label{2.32}
Let $0\neq M$ be a module over an arbitrary ring $R$. Let $I$ be an ideal and $c=\grade(I,M)$. Then the following conditions are equivalent:
\begin{itemize}
\item[(i)] $M$ is cohomologically complete intersection with respect to $I$.

\item[(ii)] For any $R$-module $N$ the natural homomorphism
\[
H^{i}_{I}(N,H^c_{I}(M))\to H^{i+c}_{I}(N,M)
\]
is an isomorphism for all $i\in \mathbb{Z}$.
\end{itemize}
Assume in addition that $R$ is local. Then the above conditions are equivalent to the following:
\begin{itemize}
\item[(iii)] For any finitely generated $R$-module $N$ the natural homomorphism
\[
U_{i+c}^I(N, D(M))\to U_{i}^I(N, D(H^c_I(M)))
\]
is an isomorphism for all $i\in \mathbb{Z}$.

\item[(iv)] For any finitely generated $R$-module $N$ the natural homomorphism
\[
L_{i+c}\Lambda^I(N, D(M))\to L_{i}\Lambda^I(N, D(H^c_I(M)))
\]
is an isomorphism for all $i\in \mathbb{Z}$.
\end{itemize}
\end{theorem}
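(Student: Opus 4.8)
The plan is to settle the easy direction immediately and then spend all the effort on the converse, which carries the content. The implication (i)$\Rightarrow$(ii) is exactly Corollary \ref{2.2}(i), and over a local ring (i)$\Rightarrow$(iii) and (i)$\Rightarrow$(iv) are Corollary \ref{2.2}(iv) and (v). So the genuine task is to recover the vanishing $H^i_I(M)=0$ for $i\neq c$ from the assumption that the displayed natural homomorphisms are isomorphisms.

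For the converse I would specialize condition (ii) to the module $N=R$, which is legitimate since $R$ is itself an $R$-module (indeed a finitely generated one). Because $N/I^sN=R/I^s$, passing to the direct limit collapses $H^i_I(R,-)$ to ordinary local cohomology $H^i_I(-)$, so the map in (ii) becomes $H^i_I(H^c_I(M))\to H^{i+c}_I(M)$, an isomorphism for all $i$. Now I invoke the ``moreover'' clause of Theorem \ref{2}(i) with $N=R$: since $H^i_I(R,C^{\cdot}_M(I))=\lim_{\longrightarrow}\Ext^i_R(R/I^s,C^{\cdot}_M(I))=H^i_I(C^{\cdot}_M(I))$ is the hyper-local-cohomology of the truncation complex, the isomorphisms force $H^i_I(C^{\cdot}_M(I))=0$ for all $i$. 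It then remains to upgrade this vanishing of hypercohomology to the vanishing of the ordinary cohomology modules $H^q(C^{\cdot}_M(I))$, which by construction equal $H^q_I(M)$ for $q>c$ and vanish for $q\le c$.

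This upgrade is the crux and the step I expect to fight hardest for. I would use the second hyper-local-cohomology spectral sequence
\[
E_2^{p,q}=H^p_I\big(H^q(C^{\cdot}_M(I))\big)\Longrightarrow H^{p+q}_I(C^{\cdot}_M(I)),
\]
whose abutment is zero by the previous step (the complex is bounded below, so convergence is automatic). Suppose $H^q(C^{\cdot}_M(I))$ is not identically zero and let $j_0$ be the least $q$ with $H^{j_0}(C^{\cdot}_M(I))\neq 0$; then $j_0>c$ and $H^{j_0}(C^{\cdot}_M(I))=H^{j_0}_I(M)$. The corner term is $E_2^{0,j_0}=\Gamma_I(H^{j_0}_I(M))$, and since every local cohomology module is $I$-torsion this equals $H^{j_0}_I(M)\neq 0$. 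As $(0,j_0)$ has the lowest total degree carrying a nonzero $E_2$-term, no differential can touch it: incoming differentials start from columns with negative $p$, and outgoing ones land in rows with $q<j_0$, where $H^q(C^{\cdot}_M(I))=0$. Hence $E_2^{0,j_0}=E_\infty^{0,j_0}$ is a nonzero subquotient of the abutment $H^{j_0}_I(C^{\cdot}_M(I))=0$, a contradiction. Therefore $H^q(C^{\cdot}_M(I))=0$ for all $q$, i.e. $H^q_I(M)=0$ for every $q>c$; combined with the vanishing below $c$ forced by $c=\grade(I,M)$, this is precisely (i).

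Finally, over a local ring I would tie (iii) and (iv) back to (ii) through Matlis duality rather than re-run any spectral sequence. For a fixed finitely generated $N$ and fixed $i$, Corollary \ref{02}(1) shows that the map in (ii), the map in (iii) and the map in (iv) are simultaneously isomorphisms; hence (iii), (iv) and ``(ii) for all finitely generated $N$'' are equivalent conditions. Since $R$ itself is finitely generated, assuming (iii) or (iv) yields the isomorphism of (ii) for $N=R$, which by the argument above produces (i); conversely (i) returns (iii) and (iv) via Corollary \ref{2.2}(iv),(v). The only point requiring care throughout is the legitimacy of specializing to $N=R$, which is exactly what lets a statement quantified over all (finitely generated) $N$ detect the intrinsic cohomology of $M$.
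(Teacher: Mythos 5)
Your proposal is correct and follows essentially the same route as the paper: reduce to proving (ii)$\Rightarrow$(i), specialize to $N=R$ to get $H^i_I(C^{\cdot}_M(I))=0$ for all $i$ via the criterion in Theorem \ref{2}(i), identify this hypercohomology with $H^i(C^{\cdot}_M(I))\cong H^i_I(M)$ for $i>c$, and handle (iii), (iv) through Corollary \ref{02}(1) and Corollary \ref{2.2}. The only difference is that the paper outsources the identification $H^i_I(C^{\cdot}_M(I))\cong H^i(C^{\cdot}_M(I))$ to \cite[Lemma 2.5]{waqas2}, whereas you prove it directly with a correct corner argument in the hypercohomology spectral sequence (one could even note that $E_2^{p,q}=H^p_I(H^q(C^{\cdot}_M(I)))$ vanishes for all $p>0$ because the $H^q$ are $I$-torsion, so the sequence degenerates outright).
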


\begin{proof}
By Corollaries \ref{02}$(1)$ and \ref{2.2}$(i)$ we will only prove that $(ii)$ implies $(i)$. Suppose that $(ii)$ is true. Then substitute $N=R$ we get that $H^i_I(C^{\cdot}_M(I))= 0$ for all $i\in \mathbb{Z}$ (see Theorem \ref{2}$(i)$). By definition of the truncation complex and \cite[Lemma 2.5]{waqas2} there are the isomorphisms
\[
 H^i_{I}(C^{\cdot}_M(I))\cong H^i(C^{\cdot}_M(I)) \cong H^i_I(M) \text{ for } i> c.
\]
This proves that $M$ is cohomologically complete intersection with respect to $I$.
\end{proof}

\begin{theorem}
Let $0\neq M$ be a module over a local ring $R$. Let $I$ be an ideal and $c=\grade(I,M)$. Then the following conditions are equivalent:
\begin{itemize}
\item[(i)] $M$ is cohomologically complete intersection with respect to $I$.

\item[(ii)] For any $R$-module $N$ the natural homomorphism
\[
H^i_{I}(N,D(M))\to H^{i+c}_{I}(N,D(H^c_I(M)))
\]
is an isomorphism for all $i\in \mathbb{Z}$.
\end{itemize}
Assume in addition that $M$ is Artinian. Then the above conditions are equivalent to the following:
\begin{itemize}
\item[(iii)] For any finitely generated $R$-module $N$ the natural homomorphism
\[
U_{i+c}^I(N, D(D(H^c_I(M))))\to U_{i}^I(N, M)
\]
is an isomorphism for all $i\in \mathbb{Z}$.

\item[(vi)] For any finitely generated $R$-module $N$ the natural homomorphism
\[
L_{i+c}\Lambda^I(N, D(D(H^c_I(M))))\to L_{i}\Lambda^I(N, M)
\]
is an isomorphism for all $i\in \mathbb{Z}$.
\end{itemize}

\end{theorem}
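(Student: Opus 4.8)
The plan is to reduce the whole statement to the single implication (ii)$\Rightarrow$(i), since every other link is already available. For (i)$\Rightarrow$(ii) I would simply invoke Corollary \ref{2.2}(iii), whose conclusion is verbatim the isomorphism asserted in (ii). For the equivalence of (ii), (iii) and (vi) under the extra hypothesis that $M$ is Artinian, I would observe that the three displayed maps are precisely the maps compared in Corollary \ref{02}(2): for each fixed $i$ that corollary identifies injectivity of the $H$-map with surjectivity of the $U$- and $L$-maps, and surjectivity with injectivity. Since a homomorphism is an isomorphism exactly when it is both injective and surjective, reading Corollary \ref{02}(2) for every $i\in\mathbb{Z}$ yields (ii)$\Leftrightarrow$(iii)$\Leftrightarrow$(vi) at once, without passing through (i). Thus the theorem follows once (ii)$\Rightarrow$(i) is established.

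For (ii)$\Rightarrow$(i) I would follow the template of Theorem \ref{2.32}. By Theorem \ref{2}(ii) the hypothesis (ii) is equivalent to $H^i_I(N,D(C^{\cdot}_M(I)))=0$ for all $i\in\mathbb{Z}$ and all $N$. Specializing to $N=R$ gives $H^i_I(D(C^{\cdot}_M(I)))=0$ for all $i$; in other words ${\rm R}\Gamma_I(D(C^{\cdot}_M(I)))$ is acyclic. The goal is to convert this vanishing, which lives on the Matlis dual side, back into acyclicity of the truncation complex $C^{\cdot}_M(I)$ itself. Recalling that $H^j(C^{\cdot}_M(I))\cong H^j_I(M)$ for $j>c$ and $=0$ for $j\le c$, the condition $C^{\cdot}_M(I)\simeq 0$ is exactly the statement that $H^j_I(M)=0$ for all $j\neq c$, i.e. condition (i).

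To make this conversion I would use the hyper (derived-complex) form of the Hom--Tensor duality of Lemma \ref{445}(i), namely Schenzel's duality ${\rm R}\Gamma_I\circ D\simeq D\circ {\rm L}\Lambda^I$ (see \cite{pet2}). This turns the acyclicity of ${\rm R}\Gamma_I(D(C^{\cdot}_M(I)))$ into that of $D({\rm L}\Lambda^I(C^{\cdot}_M(I)))$, and since the Matlis dual functor is faithful I conclude ${\rm L}\Lambda^I(C^{\cdot}_M(I))\simeq 0$. Finally, because the cohomology of $C^{\cdot}_M(I)$ is $I$-torsion, the complex $C^{\cdot}_M(I)$ is derived $I$-torsion, so the Greenlees--May equivalence \cite{g} (in the form ${\rm R}\Gamma_I\circ{\rm L}\Lambda^I\simeq{\rm R}\Gamma_I$) gives $C^{\cdot}_M(I)\simeq {\rm R}\Gamma_I(C^{\cdot}_M(I))\simeq {\rm R}\Gamma_I({\rm L}\Lambda^I(C^{\cdot}_M(I)))\simeq 0$, which is (i).

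The main obstacle is precisely this last transfer across Matlis duality. In Theorem \ref{2.32} the analogous step was immediate: the cohomology of $C^{\cdot}_M(I)$ is $I$-torsion, so the elementary identification $H^i_I(C^{\cdot}_M(I))\cong H^i(C^{\cdot}_M(I))$ applies directly. Here, however, the cohomology of $D(C^{\cdot}_M(I))$ is $D(H^{-i}_I(M))$, which is no longer $I$-torsion, so the termwise argument is unavailable and one must instead exploit the derived interaction between torsion and completion. Care is needed with the $\varprojlim^1$ discrepancy recorded in Lemma \ref{1.3} (which separates $U^I_i$ from $L_i\Lambda^I$) and with the convergence of the hypercohomology spectral sequence of $D(C^{\cdot}_M(I))$; alternatively, one may avoid the derived-category formalism altogether and argue degreewise by descending induction on the top nonvanishing local cohomology module $H^j_I(M)$.
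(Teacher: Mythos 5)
Your reduction of the whole statement to the single implication (ii)$\Rightarrow$(i) is exactly the paper's: Corollary \ref{2.2}(iii) gives (i)$\Rightarrow$(ii), and Corollary \ref{02}(2), read for every $i$, identifies the three maps up to being isomorphisms. Likewise, specializing Theorem \ref{2}(ii) at $N=R$ to obtain $H^i_I(D(C^{\cdot}_M(I)))=0$ for all $i\in\mathbb{Z}$ is the paper's first move. The problem is the step you yourself single out as the main obstacle. The duality you invoke, ${\rm R}\Gamma_I\circ D\simeq D\circ{\rm L}\Lambda^I$, is not what Schenzel \cite{pet2} or Greenlees--May \cite{g} prove: the unconditional statement is the \emph{opposite} composition, ${\rm L}\Lambda^I\circ D\simeq D\circ{\rm R}\Gamma_I$, which follows because $D$ turns the direct system $\Ext^i_R(R/I^s,-)$ into an inverse system of Tor's via Lemma \ref{445}(ii) (valid since $R/I^s$ is finitely generated). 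The direction you need amounts, termwise on the telescope/\v{C}ech resolution, to the evaluation map $F\otimes_R D(Y)\to D(\Hom_R(F,Y))$ being an isomorphism for non-finitely-presented flat modules $F$; already for $F=R^{(\mathbb{N})}$ this is the map $D(Y)^{(\mathbb{N})}\to D(Y^{\mathbb{N}})$, which is injective but generally far from surjective. Since $C^{\cdot}_M(I)$ is a complex of (large) injective modules, no finiteness hypothesis is available to repair this, so the conclusion ${\rm L}\Lambda^I(C^{\cdot}_M(I))\simeq 0$ is unjustified. (Your subsequent uses of ${\rm R}\Gamma_I\circ{\rm L}\Lambda^I\simeq{\rm R}\Gamma_I$ and of the $I$-torsionness of $H^*(C^{\cdot}_M(I))$ are fine, and your fallback suggestion of a ``degreewise descending induction'' is not developed enough to stand in for the missing step.)

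The paper closes exactly this gap by testing against $R/I$ instead of dualizing the whole torsion/completion adjunction. From the exactness of $\Check{C}_{\underline{x}}\otimes_R D(C^{\cdot}_M(I))$ it deduces that $\Tor^R_i(R/I,D(C^{\cdot}_M(I)))=0$ for all $i$: if $L^{\cdot}_R$ is a free resolution of $D(C^{\cdot}_M(I))$ and $F_{\cdot}^R$ one of $R/I$, then $F_{\cdot}^R\otimes_R\Check{C}_{\underline{x}}\otimes_R D(C^{\cdot}_M(I))$ is exact and quasi-isomorphic to $\Check{C}_{\underline{x}}\otimes_R(R/I\otimes_R L^{\cdot}_R)$, while the latter is isomorphic to $R/I\otimes_R L^{\cdot}_R$ itself because its terms are supported in $V(I)$. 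Then the legitimate direction of Hom--Tensor duality (Lemma \ref{445}, with the finitely generated module $R/I$ in the first slot) converts this into $\Ext^i_R(R/I,C^{\cdot}_M(I))=0$ for all $i$, whence $H^i_I(C^{\cdot}_M(I))=0$ by \cite[Theorem 2.2]{v}, and finally $C^{\cdot}_M(I)$ is exact because its cohomology is $I$-torsion, which is condition (i). You should replace your appeal to the reversed duality by an argument of this kind.
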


\begin{proof}
Note that it will be enough to prove that $(ii)$ implies $(i)$ (see Corollaries \ref{02}$(2)$ and \ref{2.2}$(iii)$). To do this let us assume that $(ii)$ is true. Then substitute $N=R$ we get that $H^i_I(D(C^{\cdot}_M(I)))= 0$ for all $i\in \mathbb{Z}$ (see Theorem \ref{2}$(ii)$). We claim that $H^i_I(C^{\cdot}_M(I))= 0$ for all $i\in \mathbb{Z}$. Note that by \cite[Theorem 2.2]{v} it will be enough to prove that $\Ext^{i}_R(R/I,C^{\cdot}_M(I))= 0$ for all $i\in \mathbb{Z}$.

To do this let $\Check{C}_{\underline{x}}$ be the \v{C}ech complex with respect to $\underline{x}= x_1, \ldots ,x_s\in I$ such that $\Rad I= \Rad(\underline{x})R$. Then it implies that $\Check{C}_{\underline{x}}\otimes_{R} D(C^{\cdot}_M(I))$ is an exact complex. Here we use that $H^i_I(D(C^{\cdot}_M(I)))= 0$ for all $i\in \mathbb{Z}$. Suppose that $L^{\cdot}_R$ denote a free resolution of $D(C^{\cdot}_M(I)).$ Let $X:= R/I\otimes_{R} L^{\cdot}_R$ then there is an isomorphism
\[
\Tor_{-i}^R(R/I, D(C^{\cdot}_M(I)))\cong H^i(X)
\]
for all $i\in \mathbb{Z}$. Since the support of each module of $X$ is in $V(I)$. It follows that there is an isomorphism of complexes
\[
\Check{C}_{\underline{x}}\otimes_{R} X\cong X.
\]
Let $F_{\cdot}^R$ be a free resolution of a finitely generated $R$-module $R/I$. By the above arguments the following complex is exact
\[
Y:=F_{\cdot}^R\otimes_{R} \Check{C}_{\underline{x}}\otimes_{R} D(C^{\cdot}_M(I)).
\]
Moreover $F_{\cdot}^R$ is a right bounded complex of finitely generated free $R$-modules and $\Check{C}_{\underline{x}}$ is a bounded complex of flat $R$-modules. So $Y$ is quasi-isomorphic to $\Check{C}_{\underline{x}}\otimes_{R} F_{\cdot}^R\otimes_{R} L^{\cdot}_R$. So it is homologically trivial. Note that the morphism of complexes
\[
\Check{C}_{\underline{x}}\otimes_{R} F_{\cdot}^R\otimes_{R} L^{\cdot}_R\to \Check{C}_{\underline{x}}\otimes_{R} R/I\otimes_{R} L^{\cdot}_R= \Check{C}_{\underline{x}}\otimes_{R} X
\]
induces an isomorphism in cohomologies. It follows that the complex $\Check{C}_{\underline{x}}\otimes_{R} X$ is homologically trivial. Therefore $\Tor_{i}^R(R/I, D(C^{\cdot}_M(I)))=0$ for all $i\in \mathbb{Z}$. Then by Lemma \ref{445} we get that
\[
\Ext^{i}_R(R/I,C^{\cdot}_M(I))=0 \text { for all $i \in \mathbb Z$.}
\]
This proves the claim that $H^i_I(C^{\cdot}_M(I))= 0$ for all $i\in \mathbb{Z}$. Hence by the proof of Theorem \ref{2.32} $M$ is cohomologically complete intersection with respect to $I$.
\end{proof}

In the following, as an application of our results, we will prove some results on grade and cograde.

\begin{corollary}\label{31}
Let $I$ be an ideal of an arbitrary ring $R$. Let $M$ and $N$ be finitely generated modules over $R$ with $c=\grade(I,M)$. Then the following are true:
\begin{itemize}
\item[(i)] $c=\inf\{i\in \mathbb{N}: H^i_{I}(N,M)\neq 0\}=$ $\inf\{i\in \mathbb{N}: H^{i-c}_{I}(N,H^c_I(M))\neq 0\}$.

\item[(ii)] Suppose now that $R$ is local. Then we have
\[
c=\inf\{i\in \mathbb{N}: U^I_i(N,D(M))\neq 0\}=\inf\{i\in \mathbb{N}: U^I_{i-c}(N,D(H^c_I(M))\neq 0\}
\]
$=\inf\{i\in \mathbb{N}: L_i\Lambda^I (N,D(M))\neq 0\}=$ $\inf\{i\in \mathbb{N}: L_{i-c}\Lambda^I (N,D(H^c_I(M)))\neq 0\}$.

\item[(iii)] If in addition $M$ is Artinian and set $t:=\Cograde_M(N/IN)$. Then we have
\[
t=\inf\{i\in \mathbb{N}: L_i\Lambda^I(N,M)\neq 0\}=\inf\{i\in \mathbb{N}: H^i_{I}(N,D(M))\neq 0\}.
\]
\end{itemize}
\end{corollary}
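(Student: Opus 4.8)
The plan is to treat the three parts as progressively transported versions of one vanishing/nonvanishing statement, reading off the least nonvanishing degree from the natural maps and isomorphisms already established, and only in the last part bringing in genuinely new cograde input.

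For (i) I would start from Proposition \ref{2.1}(i), which already records that $H^i_I(N,M)=0$ and $H^{i-c}_I(N,H^c_I(M))=0$ for all $i<c$, together with the isomorphism $H^0_I(N,H^c_I(M))\cong H^c_I(N,M)$. The only extra input needed is the nonvanishing $H^c_I(N,M)\neq 0$, which is exactly what \cite[Proposition 5.5]{b1} supplies and is in fact used inside the proof of Proposition \ref{2.1}; since $H^c_I(N,M)\cong H^0_I(N,H^c_I(M))$, this one fact forces the degree-$c$ term of both families to be nonzero while everything below degree $c$ vanishes. Hence both infima in (i) equal $c$.

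For (ii), with $R$ local, I would transport (i) across Matlis duality. Corollary \ref{001}(i) gives $U^I_i(N,D(M))=0 \iff H^i_I(N,M)=0 \iff L_i\Lambda^I(N,D(M))=0$ for each fixed $i$, so the infima of $\{i: U^I_i(N,D(M))\neq 0\}$ and $\{i: L_i\Lambda^I(N,D(M))\neq 0\}$ both coincide with the infimum of (i), namely $c$. For the shifted families involving $H^c_I(M)$, I would use the vanishing $U^I_{i-c}(N,D(H^c_I(M)))=0$ for $i<c$ together with the isomorphism $U^I_c(N,D(M))\cong U^I_0(N,D(H^c_I(M)))$ from Proposition \ref{2.1}(ii), and the parallel statement (iii) of that proposition for $L\Lambda$. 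Since $U^I_c(N,D(M))\neq 0$ by the previous step, the degree-$c$ term of each shifted family survives, so every infimum listed in (ii) equals $c$.

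For (iii) the structural reduction is Proposition \ref{2.3}(i): as $N$ is finitely generated and $M$ is Artinian, $L_i\Lambda^I(N,M)\cong U^I_i(N,M)$ and $D(H^i_I(N,D(M)))\cong U^I_i(N,M)$ for every $i$. Because the Matlis dual functor is faithful (so $D(X)=0$ iff $X=0$), the three families $L_i\Lambda^I(N,M)$, $H^i_I(N,D(M))$ and $U^I_i(N,M)$ vanish in exactly the same degrees; both infima asserted in (iii) therefore reduce to $\inf\{i: U^I_i(N,M)\neq 0\}$. It then remains to identify this number with $t=\Cograde_M(N/IN)$, and this is where I expect the real work to sit: I would invoke the cograde theory of \cite{o} together with the non-vanishing result \cite[Theorem 2.4]{k}, using the Matlis-duality correspondence between $M$-coregular sequences in $\Ann_R(N/IN)$ and $D(M)$-regular sequences to rewrite $\Cograde_M(N/IN)$ as $\grade(\Ann_R(N/IN),D(M))=\grade(I+\Ann_R N,D(M))$, and then matching this grade with the least degree in which $U^I_\bullet(N,M)$ survives. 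The main obstacle is precisely this last identification in (iii): unlike the purely formal manipulations that settle (i) and (ii), it rests on the width/cograde vanishing-nonvanishing machinery, i.e. the dual of the Rees characterization of grade, and one should check along the way that $N/IN\neq 0$ so that $t$ is a genuine nonnegative integer.
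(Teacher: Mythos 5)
Your proposal is correct and follows essentially the same route as the paper: the paper likewise reduces everything to the two known facts $c=\inf\{i: H^i_{I}(N,M)\neq 0\}$ and $t=\inf\{i: U^I_i(N,M)\neq 0\}$ and then transports them via Corollary \ref{001} and Propositions \ref{2.1} and \ref{2.3}. The one step you flag as the ``main obstacle'' in (iii) --- identifying $t=\Cograde_M(N/IN)$ with $\inf\{i: U^I_i(N,M)\neq 0\}$ --- is not reproved in the paper either; it is simply cited from \cite[Theorem 4.2]{b2} (alongside \cite[Theorem 2.3]{s} and \cite[Proposition 5.5]{b1}), so no gap remains once that reference is invoked.
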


\begin{proof}
It is well-known that
\[
c=\inf\{i\in \mathbb{N}: H^i_{I}(N,M)\neq 0\} \text { and } t=\inf\{i\in \mathbb{N}: U^I_i(N,M)\neq 0\}
\]
(see \cite[Theorem 2.3]{s}, \cite[Proposition 5.5]{b1} and \cite[Theorem 4.2]{b2}). Then all the claims of the Corollary can be easily seen with the help of Corollary \ref{001}$(ii)$ and Proposition \ref{2.1}.
\end{proof}



\begin{thebibliography}{9999}

\bibitem{b} {\sc M. Brodmann and Y. Sharp:} Local Cohomology. An algebraic introduction with geometric applications. Cambridge studies in advanced Mathematics. Cambridge University Press, 1998.

\bibitem{b1} {\sc M. H. Bijan-Zadeh:} A common generalization of local cohomology theories, Glasgow Math. J. 21 (1980), 173-181.

\bibitem{b2} {\sc M. H. Bijan-Zadeh and K. Moslehi:} A generalization of local homology functors, Romanian journal of Mathematics and computer science, 2 (2012), 62-72.

\bibitem{c1} {\sc  N. T. Cuong and T. T. Nam:} The I-adic completion and local homology for Artinian modules, Math. Proc. Cambridge Philos. Soc. 131 (2001), 61-72

\bibitem{l} {\sc L. Chua and Z. Tang:} On the Artinianness of generalized local cohomology, Communications in Algebra, 35 (2007), 3821-3827.

\bibitem{c2} {\sc  N. T. Cuong and N. V. Hoang:} On the vanishing and the finiteness of supports of generalized local cohomology modules, Manuscripta Math. 126 (2008), 59-72.

\bibitem{d1} {\sc K. Divaani-Aazar and A. Hajikarimi:} Generalized local cohomology modules and homological Gorenstein dimensions, Comm. Algebra, 39 (2011), 2051-2067.

\bibitem{goth} {\sc A. Grothendieck:} Local cohomology, notes by R. Hartshorne, Lecture notes in Math. vol. 41, Springer, 1967.

\bibitem{g} {\sc J. P. C. Greenlees and J. P. May:} Derived functors of I-adic completion and local homolgy, J. Algebra, 142 (1992), 438-453.

\bibitem{har1} {\sc R. Hartshorne:} Residues and duality (Lecture Notes in Mathematics), Vol. 20, Springer, 1966.

\bibitem{her} {\sc J. Herzog:} Komplexe, Aufl$\ddot{o}$sungen und Dualitat in der Lokalen Algebra, Habilitationsschrift, Universit$\ddot{a}$t Regensburg, 1970.

\bibitem{her1} {\sc J. Herzog and N. Zamani:} Duality and vanishing of generalized local cohomology. Arch. Math. J., (5) 81 (2003), 512-519.

\bibitem{h} {\sc C. Huneke :} Lectures on local cohomology (with an appendix by Amelia Taylor),Contemporary Mathematics 436 (2007), 51-100.

\bibitem{v} {\sc S. H. Hassanzadeh, A. R. Vahidi:} On vanishing and cofiniteness of generalized local cohomology modules, Comm. Algebra, 37 (2009), 2290-2299.

\bibitem{k1} {\sc  M. T. Kosan:} On Artinian generalized local cohomology modules, Proc. Indian Acad. Sci. (Math. Sci.), 119 (2009), 453-458.

\bibitem{mat} {\sc E. Matlis:} The Kosul complex and duality. Comm. Algebra, (2), 1 (1974), 87-144.

\bibitem{mat1} {\sc E. Matlis:} The higher properties of R-sequences. J. Algebra, 50 (1978), 77-112.

\bibitem{m} {\sc H. Matsumura:} Commutative ring theory, Cambridge studies in advanced Mathematics 8, Cambridge University Press, Cambridge 1986.

\bibitem{ma} {\sc A. Mafi:} Some properties of generalized local cohomology modules, Acta Mathematica Vietnamica, 31 (2006), 99-102.

\bibitem{k} {\sc K. Moslehi and M. Ahmadi:} On vanishing of generalized local homology modules and its duality, Romanian journal of Mathematics and computer science, 4 (2014), 44-49.

\bibitem{waqas2} {\sc W. Mahmood:} On cohomologically complete intersections in Cohen-Macaulay rings, Mathematical reports Romanian academy, 16 (68), 1(2016), 21-40.

\bibitem{waqas1} {\sc W. Mahmood:} On endomorphism rings of local cohomology modules, ArXiv:1308.2584v1 [math.AC](2013).

\bibitem{n}  {\sc T. T. Nam:} Left-Derived Functors of the Generalized I-Adic Completion and Generalized Local Homology, Communications in Algebra 38: 440-453, 2010.

\bibitem{n1}  {\sc T. T. Nam:} Generalized local homology for artinian modules.  Algebra Colloq., 19 (2012), 1205-1212.

\bibitem{o} {\sc A. Ooishi:} Matlis duality and the width of a module, Hiroshima Math. J., 6 (3) (1976), 573-587.

\bibitem{r} {\sc J. Rotman:} An introduction to homological algebra, Springer, United Kingdom Edition published by Academic press limited, Facultat de Matemiltiques i EstaciHs1, 1993.

\bibitem{s} {\sc N. Suzuki:} On the generalized local cohomology and its duality, J. Math. Kyoto. Univ. 18 (1) (1978), 71-85.

\bibitem{pet2} {\sc P. Schenzel:} Proregular sequences, Local cohomology, and Completion, Math. Scand. 92 (2003), 161-180.

\bibitem{p2} {\sc P. Schenzel:}  On birational Macaulayfications and Cohen-Macaulay canonical modules. J. Algebra, 275 (2004), 751-770.

\bibitem{p3} {\sc P. Schenzel:} Matlis duals of local cohomology modules and their endomorphism rings. Arch. Math. 95 (2010) 115-123.

\bibitem{w} {\sc C. Weibel:} An introduction to homological Algebra, Cambridge Univ. Press, 1994.

\bibitem{y} {\sc S. Yassemi:} Generalized section functors, J. Pure. Apple. Algebra, 95 (1994), 103-119.

\end{thebibliography}
\end{document}